\newcommand{\R}{\mathbb{R}}
\newcommand{\Z}{\mathbb{Z}}
\newtheorem{definition}{\bfseries Definition}
\newtheorem{example}{\bfseries Example}
\newtheorem{theorem}{\bfseries Theorem}
\newtheorem{lemma}{\bfseries Lemma}
\newtheorem{remark}{\bfseries Remark}
\title{Periodic Event-triggered Control for Incrementally Quadratic
	Nonlinear Systems}
\date{}
\author{Xiangru Xu\footnote{X. Xu is with Department of Mechanical Engineering,  University of Wisconsin-Madison, Madison, WI, USA. Email: {\tt\small xiangru.xu@wisc.edu}.}, Adam M. Tahir, Beh\c{c}et A\c{c}\i kme\c{s}e\footnote{A. Tahir and B. A\c{c}\i kme\c{s}e are with  Department of Aeronautics \& Astronautics, University of Washington, Seattle, WA, USA. Emails: {\tt\small \{tahiram,behcet\}@uw.edu}.}}
\begin{document}
\maketitle

\begin{abstract}   
{\color{black}Periodic event-triggered control (PETC) evaluates the triggering rule periodically and is well-suited for implementation on digital platforms. 
This paper investigates PETC design for nonlinear systems affected by external disturbances under the impulsive system formulation.} Sufficient conditions are provided to ensure the input-to-state stability of the resulting closed-loop system for the state feedback  and the observer-based output feedback configurations separately. For each configuration, the sampling period and the triggering functions are provided explicitly. Sufficient conditions in the form of linear matrix inequalities are provided for the PETC design of incrementally quadratic nonlinear systems. Two examples are given to illustrate the effectiveness of the proposed method.
\end{abstract}


\section{Introduction}\label{sec:intro}
Digital control systems are  traditionally executed in a time-triggered fashion where the sensors and actuators are accessed periodically.  
In contrast, event-triggered control (ETC) executes the communication and actuation only when certain triggering rules are satisfied; this can be seen as adding feedback to the communication and actuation processes (see a recent survey paper \cite{heemels2012introduction} and references therein). The ETC paradigm is designed to avoid unnecessary waste of communication/computation resources by reducing the number of communication/actuation executions, while still guaranteeing a desirable closed-loop performance  \cite{tabuada2007event,shoukry2016event,wang2011event,garcia2013model,borgers2014event,donkers2012output,abdelrahim2017robust,abdelrahim2016stabilization}; this shows potential in applications of systems with limited communication bandwidth such as networked control systems.

Since the triggering condition of ETC has to be monitored continuously, it is difficult to implement ETC in digital platforms directly. 
{\color{black}By evaluating the triggering conditions and deciding whether to  update the communication/actuation at each periodic sampling time, periodic event-triggered control (PETC) inherits advantages of ETC and can be implemented on standard digital platforms \cite{heemels2013periodic,heemels2013model,heemels2015periodic}. } Furthermore, Zeno phenomenon is avoided since the sampling period is a lower bound for the minimum inter-execution time. Although ETC for discrete-time models can be considered as PETC (e.g., see \cite{heemels2013model,eqtami2010event}), the inter-sample behavior of the original continuous-time systems are not captured in the discrete-time analysis. 
PETC design for (continuous-time) linear systems was investigated in \cite{heemels2013periodic,linsenmayer2019periodic}. 
However, PETC design for nonlinear systems is difficult because of an intrinsic difficulty: the discrete-time dynamics of a nonlinear system can not be exactly known from its continuous-time dynamics  \cite{luc2017periodic,borgers2018periodic,wang2016stabilization,aranda2017design,yang2018periodic}. There are few existing papers on PETC design for nonlinear systems: \cite{wang2016stabilization} studied state feedback PETC design for undisturbed nonlinear systems using the hybrid system approach and proved globally asymptotically stability of the closed-loop system; \cite{wang2018periodic} studied output feedback PETC design for disturbed nonlinear systems; \cite{wang2019periodic} extended the results of \cite{wang2016stabilization,wang2018periodic} to the decentralized setting; \cite{borgers2018periodic}  investigated state feedback PETC design for nonlinear systems by redesigning the event function of an existing continuous ETC system using overapproximation techniques, such that control performance guarantees for the continuous ETC system are preserved; \cite{luc2017periodic} studied output feedback PETC design for Lipschitz systems using impulsive observers and proved practical stabilization of the resulting system.  In spite of these interesting results, many PETC design problems for nonlinear systems are largely open and deserve to be further explored. 

{\color{black}This paper investigates input-to-state stabilization of disturbed nonlinear systems under PETC mechanisms. 
An impulsive system approach is used for modeling and analyzing the overall system. Under the assumption that there exists a sum-type ISS-Lyapunov function for the continuous dynamics, the sampling period and triggering functions are designed such that the overall system is ISS using the same ISS-Lyapunov function. The contributions of the paper are at least two-fold: (i) This work provides sufficient conditions for the input-to-state stabilization of nonlinear systems affected by disturbances using state feedback or observer-based output feedback PETC design. 
(ii) This work presents sufficient conditions in the form of linear matrix inequalities (LMIs) for the PETC design of incrementally quadratic nonlinear systems,  which subsumes many class of nonlinear systems including the Lipschitz nonlinear systems.  
Degeneration of the general results to linear control systems is also discussed.  Compared with \cite{wang2019periodic,wang2016stabilization,wang2018periodic}, this work does not rely on the small-gain techniques for hybrid systems, and provides LMI conditions for more general class of systems; compared with \cite{borgers2018periodic,luc2017periodic}, the work considers general nonlinear dynamics with external disturbances. }
\emph{Notation.} 
Denote the set of real, non-negative real and non-negative integer numbers  by $\R$, $\R_{\geq 0}$ and $\Z_{\geq 0}$, respectively. 
Denote the 2-norm by $\|\cdot\|$. Given a non-empty and closed set $\mathcal{A}$, the point-to-set distance from $x$ to $\mathcal{A}$ is denoted by $\|x\|_{\mathcal{A}}=\inf_{y\in\mathcal{A}}\|y-x\|$. Denote the identity matrix of size $n\times n$ by $I_n$. Denote the zero matrix of size  $n_1\times n_2$
by ${\bf 0}_{n_1\times n_2}$ and the zero vector of size $n$
by ${\bf 0}_{n}$; the subscripts  will be omitted when  clear from context. 
Denote the block diagonal matrix by $diag\{M_1,...,M_n\}$ where $M_1,...,M_n$ are matrices in the diagonal block. 
For symmetric matrices, $*$ stands for entries whose values follow from symmetry. A signal $x: \R_{\geq 0}\rightarrow \R^n$ is called left-continuous if $\lim_{s\rightarrow t^-}x(s)=x(t)$ for all $t>0$. ``$\forall x\;a.e.$'' means for every $x\in\R^{n_x}$ except for a set of zero Lebesgue-measure in $\R^{n_x}$. The definitions of $\mathcal{K}$-function, $\mathcal{K}_{\infty}$-function, $\mathcal{KL}$-function, and input-to-state stability can be found in Section 4.4 of \cite{khalil02book}.

\section{Problem Statement}\label{sec:formulation}

Fig.\ref{figstate} (a) shows the configuration of implementing the state feedback PETC. 
The plant is a nonlinear system given as
\begin{align}
\dot{x}(t)&=f(x(t),u(t),w(t))\label{NLsys1}
\end{align}
where $x\in\R^{n_x}$ is the state, $u\in\R^{n_u}$ is the control input, $w\in\R^{n_w}$ is the disturbance, $f$ is a locally Lipschitz continuous function. The state feedback controller is given as $u(t)=k(x(t))$ where $k(\cdot)$ is a continuous function. Assume $u(t)$ is designed such that the solution to  the system $\dot{x}(t)=f(x(t),k(x(t)),w(t))$ exists for all time and all initial conditions, and the closed-loop system is input-to-state stable (ISS) with respect to (w.r.t.) $w$.

Denote the sampling period to be $h>0$, and define the sampling times as $t_k:=kh$ for any $k\in\Z_{\geq 0}$. With the event-triggering mechanism (ETM), the state of the plant, $x(t)$, is sampled 
at each sampling time $t_k$. The input to the controller, $\tilde x_c(t)$, is updated only when the event-triggering condition for the state is satisfied. Specifically, $\tilde x_c(t)$ is a left-continuous, piecewise constant signal that is  defined for $t\in(t_k,t_{k+1}]$ as 
\begin{align}\label{stateTSM}
\tilde x_c(t)=\left\{\begin{array}{ll}
x(t_k),&\mbox{if}\;\Gamma_x(x(t_k),e(t_k))\geq 0,\\
\tilde x_c(t_k),&\mbox{if}\;\Gamma_x(x(t_k),e(t_k))< 0,
\end{array}\right.
\end{align} 
where $e(t)=x(t)-\tilde x_c(t)$ 
and $\Gamma_x(x(t),e(t))$ is the triggering function that will be determined later.
The triggering times $t_0^x,t_1^x,t_2^x,\dots$ are given by $t_0^x=0$ and $t_{k+1}^x=\min_{i\in\Z_{\geq 0}}\{ih\mid ih> t_k^x,\Gamma_x(x(ih),e(ih))\geq 0\}.$
The control 
input to the plant, $u(t)$, is given as 
\begin{align}
u(t)=k(\tilde x_c(t)).\label{inputtriggerstate}
\end{align} 
\begin{figure}[!ht]
	\centering
	\begin{subfigure}{0.45\linewidth}
		\centering
		\includegraphics[height=3.55cm]{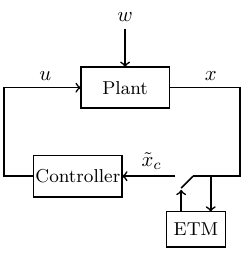}
		\caption{}
	\end{subfigure}\hfill
	\begin{subfigure}{0.55\linewidth}
		\centering
		\includegraphics[height=3.55cm]{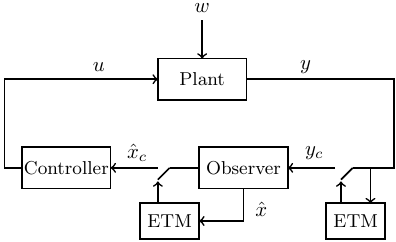}
		\caption{}
	\end{subfigure}
	\caption{(a) Configuration of the state feedback PETC (b) Configuration of the observer-based output feedback PETC }\label{figstate}
\end{figure}
Fig.\ref{figstate} (b)  shows the configuration of implementing the observer-based output feedback PETC, where  ETMs exist in both the communication and actuation
channels. The plant is given in \eqref{NLsys1} and the output is 
\begin{align}
y(t)&=g(x(t))\label{NLsys2}
\end{align}
where $y\in\R^{n_y}$ and $g$ is a continuous function. The observer is 
\begin{align}
\dot{\hat x}=\varphi(\hat x,u,y)\label{obser}
\end{align}
where $\hat x\in\R^{n_x}$, $\varphi$ is a continuously differentiable function, and the observer-based controller is given as $u(t)=k(\hat x(t))$ where $k(\cdot)$ is a continuous function. Assume that $\varphi(\cdot)$ and $k(\cdot)$ are designed for \eqref{NLsys1} and \eqref{NLsys2} such that without ETMs, the solution to  the closed-loop system exists for all time and all initial conditions, $\hat x$ asymptotically converges to $x$ when $w=0$, and the system \eqref{NLsys1} implementing the controller $u(t)$ is ISS w.r.t. $w$. When the ETMs are implemented, the output of the plant, $y(t)$, is sampled at each sampling time $t_k$. The input to the observer, $y_c(t)$, is updated only when the event-triggering condition for the output is satisfied. Specifically, $y_c(t)$ is a left-continuous, piecewise constant signal that is  defined for $t\in(t_k,t_{k+1}]$ as 
\begin{align}
y_c(t)=\left\{\begin{array}{ll}
y(t_k),&\mbox{if}\;\Gamma_y(y(t_k),y_e(t_k))\geq 0,\\
y_c(t_k),&\mbox{if}\;\Gamma_y(y(t_k),y_e(t_k))< 0,
\end{array}\right.\label{eqyc}
\end{align} 
where $y_e(t)=y_c(t)-y(t)$ 
and $\Gamma_y(y(t),y_e(t))$ is the triggering function of the output that will be determined later.
The triggering times $t_0^y,t_1^y,t_2^y,\dots$ are given by $t_0^y=0$ and $t_{k+1}^y=\min_{i\in\Z_{\geq 0}}\{ih\mid ih> t_k^y,\Gamma_y(y(ih),y_e(ih))\geq 0\}.$
Under ETMs, the observer \eqref{obser} becomes
\begin{align}
\dot{\hat x}=\varphi(\hat x,u,y_c).\label{obsersampled}
\end{align}
The input to the plant, $u(t)$, is updated only when the event-triggering condition for the input is satisfied. Specifically, define a left-continuous, piecewise constant signal $\hat x_c(t)$ for $t\in(t_k,t_{k+1}]$ as
\begin{align}
\hat x_c(t)=\left\{\begin{array}{ll}
\hat x(t_k),&\mbox{if}\;\Gamma_u(\hat x(t_k),x_e(t_k))\geq 0,\\
\hat x_c(t_k),&\mbox{if}\;\Gamma_u(\hat x(t_k),x_e(t_k))< 0,
\end{array}\right.\label{eqxc}
\end{align} 
where $x_e(t)=\hat x_c(t)-\hat x(t)$
and $\Gamma_u(\hat x(t),x_e(t))$ is the  triggering function of the
input that will be determined later.
The triggering times $t_0^u,t_1^u,t_2^u,\dots$ are given by $t_0^u=0$ and $t_{k+1}^u=\min_{i\in\Z_{\geq 0}}\{ih\mid ih> t_k^u,\Gamma_u(\hat x(ih),x_e(ih))\geq 0\}.$ 
Then the control input to the plant, $u(t)$,  is given as 
\begin{align}
u(t)=k(\hat x_c(t)).\label{outputtriggerstate}
\end{align}
Systems that are implemented with ETMs are impulsive systems, which 
evolve continuously based on ODEs most of the time and exhibit impulses at some instances. 
Clearly, for systems implemented with ETMs, the impulses happen when the triggering conditions are met. 
Inspired by \cite{hespanha2008lyapunov} and \cite{lin1995various}, the input-to-state stability of impulsive systems w.r.t. a given set is defined below.


\begin{definition}\label{dfnimpulISS}Consider the following impulsive system
	\begin{align}\label{dynimpul}
	\begin{cases}
	\dot{x}(t)=f(x(t),u(t)),t\in(T_i,T_{i+1}],\\
	x^+(t)=g(x(t),u(t)),t=T_i,
	\end{cases}
	\end{align}
	where $f$ is locally Lipschitz, $i\in\Z_{\geq 0}$, $\{T_0,T_1,T_2,\dots\}$ is a sequence of impulsive times with $T_0<T_1<\dots$, the state $x(t)\in\R^n$ is absolutely continuous between impulses, $u(t)\in\R^m$ is a locally bounded Lebesgue-measurable input, and $x^+(t):=\lim_{s\rightarrow t^+}x(s)$. 
	Given a time sequence $\{T_i\}$, the impulsive system \eqref{dynimpul} is ISS w.r.t. a given non-empty and closed set $\mathcal{A}$ if there exist functions $\beta\in\mathcal{KL}$ and $\gamma\in\mathcal{K}_\infty$, such that for every initial condition $x(T_0)$ and every admissible input $u$, the solution to \eqref{dynimpul} exists globally and satisfies
	\begin{align}\label{ineqISSimpul}
	\|x(t)\|_{\mathcal{A}}\leq \beta(\|x(T_0)\|_{\mathcal{A}},t-T_0)+\gamma(\|u\|_{[T_0,T]})
	\end{align}
	where $\|\cdot\|_I$ denotes the supremum norm on an interval $I$. 
	The impulsive system \eqref{dynimpul} is \emph{uniformly ISS} w.r.t. $\mathcal{A}$ over a given class $\mathcal{S}$ of admissible sequences of impulse times if there exist functions $\beta\in\mathcal{KL}$ and $\gamma\in\mathcal{K}_\infty$ that are independent of the choice of the time sequence, such that  \eqref{ineqISSimpul} holds for every time sequence in $\mathcal{S}$.
\end{definition}

In the following, the closed-loop system implemented with ETMs is called \emph{uniformly ISS}, or just ISS for short, w.r.t. a given (non-empty and closed) set $\mathcal{A}$, if it is uniformly ISS over all impulsive times generated by the periodic event-triggering mechanisms. It should be noted that the impulsive times generated by the periodic event-triggering mechanisms have no accumulation point (i.e. Zeno phenomenon is avoided) since the inter-execution times are lower bounded by the sampling period. 

The PETC design problems that will be investigated in this paper are the following: {\color{black}\emph{1). Given the configuration in  Fig.\ref{figstate} (a), design the sampling period $h$ and the triggering function $\Gamma_x(x,e)$ such  that the closed-loop system is ISS w.r.t. $w$;
	2). Given the configuration in  Fig.\ref{figstate} (b), design the sampling period $h$ and the triggering functions $\Gamma_y(y,y_e)$, $\Gamma_u(\hat x,x_e)$ such  that the closed-loop system is ISS w.r.t. $w$; 3) For incrementally quadratic systems, find LMI conditions to determine the sampling period and triggering functions for the configurations in Fig.\ref{figstate}.}}



\section{Input-to-state Stabilization Using PETC}\label{sec:general}

{\color{black}This section investigates input-to-state stabilization of  nonlinear systems affected by
	disturbances under PETC mechanisms. 
The overall system is modeled as an impulsive system where the continuous dynamics are assumed to be ISS while the discrete dynamics are not. In order to ensure the input-to-state stability of the closed-loop system, the sampling period $h$ is chosen such that it satisfies the average dwell-time condition (e.g., see \cite{hespanha2008lyapunov,dashkovskiy2013input}) and is upper bounded by the maximum allowable sampling period (e.g., see \cite{abdelrahim2017robust,carnevale2007lyapunov,nesic2009explicit}), and the triggering functions are designed using the corresponding Lyapunov functions as well as  other available parameters. }

The following lemma from \cite{carnevale2007lyapunov} will be used to determine the interval of the sampling period $h$. 
\begin{lemma}\cite{carnevale2007lyapunov}\label{lem2}
	Let $\phi:[0,\tilde{\mathcal{T}}]\rightarrow \R$ be the solution of the following ODE:
	\begin{align}
	\dot{\phi}=-2\mu\phi-\gamma(\phi^2+1)\label{odephi}
	\end{align}
	with $\phi(0)=\lambda^{-1},0<\lambda<1$,  $\mu>0,\gamma>0$, and 
	\begin{align}
	\tilde{\mathcal{T}}(\mu,\gamma,\lambda)&=
	\begin{cases}
	\frac{1}{\mu r}arctan\left(\frac{r(1-\lambda)}{2\frac{\lambda}{1+\lambda}(\frac{\gamma}{\mu}-1)+1+\lambda}\right),\quad\; \gamma>\mu,\\
	\frac{1}{\mu}\frac{1-\lambda}{1+\lambda},\hskip 40.2mm \gamma=\mu,\\
	\frac{1}{\mu r}arctanh\left(\frac{r(1-\lambda)}{2\frac{\lambda}{1+\lambda}(\frac{\gamma}{\mu}-1)+1+\lambda}\right),\quad \gamma<\mu,
	\end{cases}\label{eqtilT}\\
	r&=\sqrt{\left|\left(\frac{\gamma}{\mu}\right)^2-1\right|}.\label{eqr}
	\end{align}
	Then, $\phi(\tau)\in[\lambda,\lambda^{-1}]$ for all $\tau\in[0,\tilde{\mathcal{T}}]$, and $\phi(\tilde{\mathcal{T}})=\lambda$.
\end{lemma}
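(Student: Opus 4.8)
The plan is to recognize \eqref{odephi} as an autonomous scalar Riccati equation with constant coefficients, hence separable, to obtain $\tilde{\mathcal{T}}$ as a closed-form definite integral, and then to evaluate that integral in the three regimes distinguished by the comparison of $\gamma$ and $\mu$.

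First I would record the qualitative behaviour. Writing the right-hand side as $-q(\phi)$ with $q(\phi):=\gamma\phi^2+2\mu\phi+\gamma$, every term of $q$ is strictly positive for $\phi>0$, so $\dot{\phi}=-q(\phi)<0$ along the trajectory as long as $\phi$ stays positive. Since $\phi(0)=\lambda^{-1}>1$ and the target value $\lambda\in(0,1)$ is itself positive, $\phi$ decreases strictly and monotonically and reaches $\lambda$ while remaining in $[\lambda,\lambda^{-1}]$; on this compact set $q$ is bounded below by a positive constant, which guarantees that the solution exists and that the hitting time is finite. The containment claim $\phi(\tau)\in[\lambda,\lambda^{-1}]$ for $\tau\in[0,\tilde{\mathcal{T}}]$ then follows immediately from monotonicity, once $\tilde{\mathcal{T}}$ is identified as the hitting time of $\lambda$.

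Next I would separate variables: from $d\phi/q(\phi)=-\,d\tau$, integrating from $\phi=\lambda^{-1}$ down to $\phi=\lambda$ gives
\[
\tilde{\mathcal{T}}=\int_{\lambda}^{\lambda^{-1}}\frac{d\phi}{\gamma\phi^2+2\mu\phi+\gamma}.
\]
Completing the square, $q(\phi)=\gamma\big[(\phi+\mu/\gamma)^2+(1-\mu^2/\gamma^2)\big]$, the sign of $1-\mu^2/\gamma^2$ dictates the three branches. For $\gamma=\mu$ the quadratic collapses to $\mu(\phi+1)^2$ and the elementary integral gives $\frac{1}{\mu}\big(\frac{1}{\lambda+1}-\frac{1}{\lambda^{-1}+1}\big)=\frac{1}{\mu}\frac{1-\lambda}{1+\lambda}$, matching the middle branch of \eqref{eqtilT}. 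For $\gamma>\mu$ the constant $1-\mu^2/\gamma^2$ is positive, the antiderivative is an arctangent, and with $r=\sqrt{(\gamma/\mu)^2-1}$ one has $\sqrt{1-\mu^2/\gamma^2}=\frac{\mu}{\gamma}r$, so the prefactor is $\frac{1}{\gamma}\cdot\frac{\gamma}{\mu r}=\frac{1}{\mu r}$; for $\gamma<\mu$ the constant is negative, the antiderivative is an inverse hyperbolic tangent (equivalently a logarithm), and the prefactor is again $\frac{1}{\mu r}$ with $r=\sqrt{1-(\gamma/\mu)^2}$.

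The main work, and the step I expect to be the real obstacle, is the algebraic reduction of the evaluated difference of two arctangents (respectively two $\operatorname{arctanh}$'s) at the endpoints $\lambda^{-1}$ and $\lambda$ into the single compact argument
\[
\frac{r(1-\lambda)}{2\frac{\lambda}{1+\lambda}\big(\frac{\gamma}{\mu}-1\big)+1+\lambda}
\]
stated in \eqref{eqtilT}. This is carried out using the subtraction identities $\arctan a-\arctan b=\arctan\frac{a-b}{1+ab}$ and $\operatorname{arctanh}a-\operatorname{arctanh}b=\operatorname{arctanh}\frac{a-b}{1-ab}$, after which one clears the common denominators in the resulting numerator and denominator and regroups the terms involving $\lambda+\lambda^{-1}$ and $\mu/\gamma$. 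Verifying that the cross terms collapse exactly to the advertised expression is the tedious part, but it is purely computational and structurally identical across the two non-degenerate cases, so the arctan case and the $\operatorname{arctanh}$ case need only be done once up to the sign of $1-\mu^2/\gamma^2$.
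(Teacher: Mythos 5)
Your proposal is correct, and it is worth noting that there is no in-paper proof to compare it against: Lemma \ref{lem2} is imported verbatim from \cite{carnevale2007lyapunov}, where the formulas for $\tilde{\mathcal{T}}$ are obtained from the explicit (tan/tanh-type) solution of the same Riccati equation. Your quadrature route is an equally valid and arguably more economical derivation: since $\dot{\phi}=-q(\phi)$ with $q(\phi)=\gamma\phi^{2}+2\mu\phi+\gamma>0$ for all $\phi\geq 0$, the solution decreases strictly from $\lambda^{-1}$, stays trapped in $[\lambda,\lambda^{-1}]$ until it reaches $\lambda$, reaches it in finite time because $q$ is bounded away from zero on that compact set, and the hitting time is $\int_{\lambda}^{\lambda^{-1}}d\phi/q(\phi)$ — this single formula delivers monotonicity, the containment, and $\phi(\tilde{\mathcal{T}})=\lambda$ at once. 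The algebraic collapse you flagged as the real obstacle does go through exactly as you predicted. For $\gamma>\mu$, with $a=\sqrt{1-\mu^{2}/\gamma^{2}}=\tfrac{\mu}{\gamma}r$, both arctan arguments $(\lambda^{\pm 1}+\mu/\gamma)/a$ are positive, so the subtraction identity applies without $\pi$-ambiguity, and since $a^{2}+\mu^{2}/\gamma^{2}=1$,
\begin{align*}
\frac{(\lambda^{-1}-\lambda)\,a}{2+\frac{\mu}{\gamma}(\lambda+\lambda^{-1})}
=\frac{(1-\lambda)(1+\lambda)\,r}{2\lambda\frac{\gamma}{\mu}+1+\lambda^{2}}
=\frac{r(1-\lambda)}{2\frac{\lambda}{1+\lambda}\left(\frac{\gamma}{\mu}-1\right)+1+\lambda},
\end{align*}
the last step using $2\lambda\left(\frac{\gamma}{\mu}-1\right)+(1+\lambda)^{2}=2\lambda\frac{\gamma}{\mu}+1+\lambda^{2}$. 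The case $\gamma<\mu$ is the same computation with $c=\mu/\gamma$, $b=\tfrac{\mu}{\gamma}r$ and $c^{2}-b^{2}=1$; the only point you should make explicit there is that $b<c$ (since $b^{2}=c^{2}-1<c^{2}$), hence $b/(\lambda+c)$ and $b/(\lambda^{-1}+c)$ lie in $(0,1)$, so both $\operatorname{arctanh}$ evaluations and the identity $\operatorname{arctanh}A-\operatorname{arctanh}B=\operatorname{arctanh}\frac{A-B}{1-AB}$ are legitimate. With that small addition the proof is complete; as a sanity check, setting $\lambda=0$ in your formulas recovers $\mathcal{T}(\mu,\gamma)=\tilde{\mathcal{T}}(\mu,\gamma,0)$, consistent with Remark \ref{remarkT}.
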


With  $r$ given in \eqref{eqr}, define $\mathcal{T}(\mu,\gamma)$ as
\begin{align}\label{MSAP}
\mathcal{T}(\mu,\gamma)=
\begin{cases}
\frac{1}{\mu r}arctan(r),\quad \gamma>\mu,\\
\frac{1}{\mu},\quad\quad\quad\quad\quad\;\; \gamma=\mu,\\
\frac{1}{\mu r}arctanh(r),\;\; \gamma<\mu.
\end{cases}
\end{align}

\begin{remark}\label{remarkT}
	Clearly, $\mathcal{T}(\mu,\gamma)$ and $\tilde{\mathcal{T}}(\mu,\gamma,\lambda)$ are both positive, and  $\mathcal{T}(\mu,\gamma)=\tilde{\mathcal{T}}(\mu,\gamma,0)$. Furthermore, for fixed $\mu,\gamma$, $\tilde{\mathcal{T}}(\mu,\gamma,\cdot)$ is a strictly decreasing function, and $\tilde{\mathcal{T}}(\mu,\gamma,\lambda)\rightarrow 0$ as $\lambda\rightarrow 1$. 
\end{remark}

\subsection{State Feedback PETC Design}\label{subsec:state}
Consider the configuration in Fig.\ref{figstate} (a) where the plant is \eqref{NLsys1} and the state feedback controller is \eqref{inputtriggerstate}. Define $\tau\in\R_{\geq 0}$ as the clock variable and ${\bf x}_s(t)=\begin{pmatrix}
x(t)\\e(t)\\\tau(t)
\end{pmatrix}$, ${\bf x}_s^+=
\begin{pmatrix}
x(t^+)\\e(t^+)\\\tau(t^+)
\end{pmatrix}$. 
The closed-loop system with the ETM in Fig.\ref{figstate} (a) is expressed as an impulsive model as follows:
\begin{align}
\dot{{\bf x}}_s\!&=\!F_s(x,e,w)\!:=\!\begin{pmatrix}
\tilde f_s(x,e,w)\\
\tilde f_s(x,e,w)\\
1
\end{pmatrix},\;\; t\in(t_k,t_{k+1}],\label{impulstate1}\\
{\bf x}_s^+
\!&=\!G_s(x,e)\!:=\!\begin{pmatrix}
x\\
g_s(x,e)\\
0
\end{pmatrix},\;\;\quad\quad t=t_k,\label{impulstate2}
\end{align}
where 
\begin{align}
\tilde f_s(x,e,w)&=f(x,k(x-e),w),\nonumber\\
g_s(x,e)&=
\begin{cases}
{\bf 0},\;\;\mbox{if}\;\Gamma_x(x,e)\geq 0,\\
e,\;\;\mbox{if}\; \Gamma_x(x,e)< 0.
\end{cases}\nonumber
\end{align}



\begin{theorem}\label{thmgeneral}
	Consider the configuration shown in Fig.\ref{figstate} (a) where the plant is \eqref{NLsys1} and the controller is \eqref{inputtriggerstate}. Suppose that there exist positive numbers $\mu,\gamma,\alpha,d$, and a differentiable, positive definite, radially unbounded function $V_1(x):\R^{n_x}\rightarrow \R_{\geq 0}$ such that $\forall {\bf x}_s\;a.e.$, $\forall w$, 
	\begin{align}
	\nabla V({\bf x}_s)F_s(x,e,w)\leq -\alpha V({\bf x}_s)+d\|w\|^2\label{eqthmflow}
	\end{align}
	where $V({\bf x}_s)=V_1(x)+V_2(e,\tau)$, $V_2(e,\tau)=\phi(\tau)e^\top e$, $\phi$ is the solution of ODE \eqref{odephi}.
	Choose positive numbers $\alpha_0,s,h,\lambda$ satisfying $\alpha_0<\alpha$, $\lambda<1$ and 
	\begin{gather}
	\frac{\ln(1+s)}{\alpha_0}<h<\mathcal{T}(\mu,\gamma),\label{eqthm1h}\\
	h=\tilde{\mathcal{T}}(\mu,\gamma,\lambda),\label{eqthm1lam}\\
	(1+s)\lambda^2<1,\label{eqthm1slam}
	\end{gather} 
	where $\tilde{\mathcal{T}}(\mu,\gamma,\lambda)$ and $\mathcal{T}(\mu,\gamma)$ are defined in  \eqref{eqtilT} and \eqref{MSAP}.
	Let the initial condition of $\phi$ be $\phi(0)=\lambda^{-1}$. If the triggering function is chosen as 
	\begin{align}
	\Gamma_x(x,e)=(\lambda^{-1}-(1+s)\lambda)\|e\|^2-sV_1(x),\label{eq1thm1tri}
	\end{align}
	then the closed-loop system \eqref{impulstate1}-\eqref{impulstate2} is ISS w.r.t. the set $\{(x,e,\tau)|(x,e)=({\bf 0},{\bf 0})\}$.
\end{theorem}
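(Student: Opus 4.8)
The plan is to treat \eqref{impulstate1}--\eqref{impulstate2} as an impulsive system whose impulse times are exactly the sampling instants $t_k=kh$ (the clock $\tau$ is reset at every $t_k$ regardless of whether the error is reset), and to show that $V$ is an ISS-Lyapunov function that contracts across one sampling interval \emph{including} the jump. First I would record the consequences of Lemma \ref{lem2}: with $\phi(0)=\lambda^{-1}$ and $h=\tilde{\mathcal{T}}(\mu,\gamma,\lambda)$ (admissible precisely because $h<\mathcal{T}(\mu,\gamma)$ forces a unique $\lambda\in(0,1)$ via Remark \ref{remarkT}), one has $\phi(\tau)\in[\lambda,\lambda^{-1}]$ for $\tau\in[0,h]$ with boundary values $\phi(0)=\lambda^{-1}$, $\phi(h)=\lambda$. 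Hence $\lambda\|e\|^2\le V_2(e,\tau)\le\lambda^{-1}\|e\|^2$ on each interval, and together with the positive definiteness and radial unboundedness of $V_1$ this gives class-$\mathcal{K}_\infty$ sandwich bounds $\underline\alpha(\|(x,e)\|)\le V(\mathbf{x}_s)\le\bar\alpha(\|(x,e)\|)$, so $V$ is proper with respect to the distance to $\mathcal{A}=\{(x,e,\tau):(x,e)=(\mathbf 0,\mathbf 0)\}$.

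The crux is the \emph{jump estimate}. At a generic $t_{k+1}$ the pre-jump clock is $\tau=h$, so $V^-=V_1(x)+\lambda\|e\|^2$; after the jump $\tau=0$ and $\phi$ returns to $\lambda^{-1}$. If $\Gamma_x(x,e)\ge0$ the error is reset, giving $V^+=V_1(x)\le V^-$; if $\Gamma_x(x,e)<0$ the error persists and $V^+=V_1(x)+\lambda^{-1}\|e\|^2$, but rearranging the triggering inequality \eqref{eq1thm1tri} as $\lambda^{-1}\|e\|^2<(1+s)\lambda\|e\|^2+sV_1(x)$ and substituting yields $V^+<(1+s)(V_1(x)+\lambda\|e\|^2)=(1+s)V^-$. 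Either way $V(\mathbf{x}_s^+)\le(1+s)V(\mathbf{x}_s)$ at every impulse; the condition $(1+s)\lambda^2<1$ enters to keep the threshold coefficient $\lambda^{-1}-(1+s)\lambda$ positive, so that \eqref{eq1thm1tri} is a genuine threshold that actually forces a reset once $\|e\|^2$ is large relative to $V_1(x)$.

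On each flow interval $(t_k,t_{k+1}]$ the hypothesis \eqref{eqthmflow} gives $\dot V\le-\alpha V+d\|w\|^2$, so by the comparison lemma $V(t_{k+1}^-)\le e^{-\alpha h}V(t_k^+)+d\int_{t_k}^{t_{k+1}}e^{-\alpha(t_{k+1}-\sigma)}\|w(\sigma)\|^2\,d\sigma$. Composing with the jump bound and writing $\xi_k:=V(\mathbf{x}_s(t_k^+))$ produces the scalar recursion $\xi_{k+1}\le\rho\,\xi_k+\tfrac{(1+s)d}{\alpha}\|w\|_{\infty}^2$ with $\rho:=(1+s)e^{-\alpha h}$. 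Here the sampling constraint \eqref{eqthm1h} is decisive: $h>\log(1+s)/\alpha_0$ gives $1+s<e^{\alpha_0h}$, whence $\rho<e^{-(\alpha-\alpha_0)h}<1$ because $\alpha_0<\alpha$. Iterating yields $\xi_k\le e^{-(\alpha-\alpha_0)t_k}\xi_0+\tfrac{(1+s)d}{\alpha(1-\rho)}\|w\|_{\infty}^2$, and reinserting the intra-sample flow bound $V(t)\le e^{-\alpha(t-t_k)}\xi_k+\tfrac{d}{\alpha}\|w\|_{\infty}^2$ upgrades this to an estimate $V(\mathbf{x}_s(t))\le\beta_V(V(\mathbf{x}_s(t_0)),t-t_0)+\gamma_V(\|w\|_{[t_0,t]})$ with $\beta_V\in\mathcal{KL}$ and $\gamma_V\in\mathcal{K}_\infty$ that are independent of the (uniform) impulse sequence.

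Finally I would convert back to the state through the first-step sandwich bounds, using $\underline\alpha^{-1}(a+b)\le\underline\alpha^{-1}(2a)+\underline\alpha^{-1}(2b)$ to separate the $\mathcal{KL}$ and $\mathcal{K}_\infty$ contributions, which delivers \eqref{ineqISSimpul} and hence uniform ISS with respect to $\mathcal{A}$. The step I expect to be the real obstacle is the jump analysis together with verifying $\rho<1$: the whole argument hinges on Lemma \ref{lem2} making $\phi$ drop from $\lambda^{-1}$ to $\lambda$ over exactly one period, so that the worst-case across-jump growth is precisely $(1+s)$, which must then be dominated by the flow contraction $e^{-\alpha h}$ --- an interplay guaranteed only by the simultaneous constraints \eqref{eqthm1h}--\eqref{eqthm1slam}.
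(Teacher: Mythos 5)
Your proposal is correct, and its skeleton coincides with the paper's: the composite Lyapunov function $V=V_1(x)+\phi(\tau)e^\top e$, Lemma \ref{lem2} driving $\phi$ from $\lambda^{-1}$ down to $\lambda$ over exactly one sampling period, and the two-case jump estimate giving $V({\bf x}_s^+)\le(1+s)V({\bf x}_s)$ at every impulse (including your observation that $(1+s)\lambda^2<1$ is only needed to keep the threshold coefficient $\lambda^{-1}-(1+s)\lambda$ positive, which is exactly the role the paper assigns it). Where you genuinely depart is the disturbance bookkeeping at the end. The paper never integrates the input: it uses the gain-margin implication $V\ge\frac{d}{\alpha-\alpha_0}\|w\|^2\Rightarrow\dot V\le-\alpha_0V$ and splits the timeline, Hespanha-style, into intervals where $V$ lies above or below the threshold $\frac{d}{\alpha-\alpha_0}\|w\|^2_{[t_0,t]}$, arriving at the max-form bound $V({\bf x}_s(t))\le\max\bigl\{e^{\frac{\log(1+s)-\alpha_0h}{h}(t-t_0)}V({\bf x}_s(t_0)),\,e^{\log(1+s)}\frac{d}{\alpha-\alpha_0}\|w\|^2_{[t_0,t]}\bigr\}$, with decay rate $\alpha_0-\log(1+s)/h>0$ enforced by \eqref{eqthm1h}. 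You instead apply variation of constants to $\dot V\le-\alpha V+d\|w\|^2$ over each period and compose with the jump to obtain the geometric recursion $\xi_{k+1}\le\rho\,\xi_k+\frac{(1+s)d}{\alpha}\|w\|^2$ with $\rho=(1+s)e^{-\alpha h}<e^{-(\alpha-\alpha_0)h}<1$ --- the same use of \eqref{eqthm1h} and $\alpha_0<\alpha$, but yielding a sum-form ISS estimate with explicit rate $\alpha-\alpha_0$ and explicit gain $\frac{(1+s)d}{\alpha(1-\rho)}$. Your route is more elementary and constructive (no interval-splitting construction, closed-form gain); the paper's route avoids the convolution integral and mirrors the impulsive-ISS framework of \cite{hespanha2008lyapunov} it builds on, which is convenient there because the flow hypothesis is only an a.e.\ differential inequality. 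Two small repairs to your write-up: replace $\|w\|_\infty$ by $\|w\|_{[t_0,t]}$ throughout (legitimate by causality, and needed to match Definition \ref{dfnimpulISS}), and account for the jump at $t_0$ itself by one extra factor $(1+s)$ relating $\xi_0=V({\bf x}_s(t_0^+))$ to $V({\bf x}_s(t_0))$.
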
 
\begin{proof} By Lemma \ref{lem2}, $\phi(\tau)\in[\lambda,\lambda^{-1}]$ for any $\tau\in[0,h]$, and $\phi(h)=\lambda$. Because $V_1$ and $V_2$ are both positive definite, the function $V$ is  positive definite w.r.t. $x$ and $e$ (i.e., $V({\bf x}_s)\geq 0$ for any $x,e\in\R^{n_x}$, and $V({\bf x}_s)=0$ when $x=e={\bf 0}$, $V({\bf x}_s)\neq 0$ otherwise). Furthermore, $V({\bf x}_s)$ is differentiable and radially unbounded for any $x,e\in\R^{n_x}$.
	
	During the continuous dynamics when $t\in(t_k,t_{k+1}]$, inequality \eqref{eqthmflow} implies
	\begin{align}
	V({\bf x}_s(t))\geq \frac{d}{\alpha-\alpha_0}\|w(t)\|^2
	\Rightarrow \dot{V}({\bf x}_s(t))\leq -\alpha_0V({\bf x}_s(t)),\;\forall t\in(t_k,t_{k+1}] \;a.e.\label{A4}
	\end{align}
	where $\dot V({\bf x}_s)$ is the derivative of $V$ along \eqref{impulstate1}.
	
	At the impulse time when $t=t_k$, there are two cases. Note that $(1+s)\lambda^2<1$ implies $\lambda^{-1}-(1+s)\lambda>0$. 
	(i) If $\Gamma_x(x,e)<0$, the triggering condition is not met. Since $\Gamma_x(x,e)<0$ implies  $\lambda^{-1}\|e\|^2<(1+s)\lambda \|e\|^2+sV_1(x)$, it holds that $V({\bf x}_s^+)= V_1(x)+\lambda^{-1} \|e\|^2<(1+s)V({\bf x}_s).$ 
	(ii) If $\Gamma_x(x,e)\geq 0$, the triggering condition is met. Then from \eqref{impulstate2} and since $e(t_k^+)={\bf 0}$, it holds that $V({\bf x}_s^+)= V_1(x)\leq V({\bf x}_s)$.
	In summary, at the impulse time when $t=t_k$, 
	\begin{align}
	V({\bf x}_s^+)\leq (1+s)V({\bf x}_s)=e^{\ln(1+s)}V({\bf x}_s).\label{ineqjump}
	\end{align} 
	Then a bound for $V({\bf x}_s(t))$ can be shown using \eqref{A4} and \eqref{ineqjump} as follows. Clearly, there exists a sequence of times $t_0:=\hat{t}_0\leq \check {t}_1< \hat{t}_1 < \check{t}_2< \hat{t}_2\dots$ such that 
	\begin{align}
	V({\bf x}_s(t))\geq \frac{d}{\alpha-\alpha_0}\|w\|^2_{[t_0,t]},&\;\forall t\in(\hat t_j,\check{t}_{j+1}],\label{A5}\\
	V({\bf x}_s(t))\leq \frac{d}{\alpha-\alpha_0}\|w\|^2_{[t_0,t]},&\;\forall t\in(\check t_i,\hat{t}_{i}],\label{A6}
	\end{align}
	where $j=0,1,2,\dots$ and $i=1,2,\dots$.
	Now consider the case when the first interval $(t_0,\check t_1]$ is non-empty, i.e., $\check t_1>t_0$. If $\check t_1<\infty$, then between any two consecutive impulses $t_{k-1},t_k\in(t_0,\check t_1]$, from \eqref{A4} and \eqref{A5}, it follows that $\dot{V}({\bf x}_s(t))\leq -\alpha_0V({\bf x}_s(t))$, $\forall t\in(t_{k-1},t_k]$ a.e., which implies that 
	$V({\bf x}_s(t_k))\leq e^{-\alpha_0h}V({\bf x}_s(t_{k-1})).$ 
	From \eqref{ineqjump}, it follows that 
	$V({\bf x}_s(t_k^+))\leq e^{\ln(1+s)}V({\bf x}_s(t_k)).$ 
	Therefore, for any $t\in(t_0,\check t_1]$, it holds that
	\begin{align}
	V({\bf x}_s(t))
	&\leq e^{\frac{\ln(1+s)-\alpha_0h}{h}(t-t_0)} V({\bf x}_s(t_0)).
	\label{A7}
	\end{align}
	If $\check t_1=\infty$, then it is easy to see that \eqref{A7} holds for any $t\in(t_0,\infty)$. Note that $\frac{\ln(1+s)-\alpha_0h}{h}<0$ by the choice of $h$ in \eqref{eqthm1h}.
	Next, consider the case when $t\geq \check t_1$. For any subinterval $(\check{t}_i,\hat t_i],i\geq 1$ where $\hat t_i<\infty$, inequality \eqref{A6} holds. If $\hat t_i$ is not an impulse time, then \eqref{A6} holds for $t=\hat t_i$. If  $\hat t_i$ is an impulse time, then \eqref{ineqjump} implies that 
	\begin{align}
	V({\bf x}_s(\hat t_i^+))\leq e^{\ln(1+s)}\frac{d}{\alpha-\alpha_0}\|w\|^2_{[t_0,\hat t_i]}.\label{A8}
	\end{align}
	In either case, inequality \eqref{A8} holds. For any subinterval $(\check{t}_i,\hat t_i]$, $i\geq 1$, where $\hat t_i=\infty$, it is easy to see that inequality \eqref{A8} also holds. In summary, \eqref{A8} holds for any subinterval $(\check{t}_i,\hat t_i],i\geq 1$.
	
	For any subinterval $(\hat t_j,\check{t}_{j+1}],j\geq 1$, using the same argument that  derives \eqref{A7}, the following inequality holds for any $t\in (\hat t_j,\check{t}_{j+1}]$:
	\begin{align}
	V({\bf x}_s(t))&\leq e^{\frac{\ln(1+s)-\alpha_0h}{h}(t-\hat t_i)} V({\bf x}_s(\hat t_i))\nonumber\\
	&\leq e^{\ln(1+s)}\frac{d}{\alpha-\alpha_0}\|w\|^2_{[t_0,\hat t_j]}.\label{A9}
	\end{align}
	Combing \eqref{A7}, \eqref{A8} and \eqref{A9}, it holds that 
	\begin{align*}
	V({\bf x}_s(t))\leq\max\{e^{\frac{\ln(1+s)-\alpha_0h}{h}(t-t_0)} V({\bf x}_s(t_0)),
	e^{\ln(1+s)}\frac{d}{\alpha-\alpha_0}\|w\|^2_{[t_0,t]}\},\;\forall t\geq t_0
	\end{align*} 
	Since $e^{\frac{\ln(1+s)-\alpha_0h}{h}(t-t_0)}$ is a strictly decreasing function for $t\geq t_0$, and $V$ is positive definite and radially unbounded for any $x,e\in\R^{n_x}$, by the standard argument for ISS (e.g., see \cite{sontag2008iss,lin1995various,hespanha2008lyapunov,dashkovskiy2013input}), it can be concluded that \eqref{ineqISSimpul} holds with the set $\mathcal{A}:=\{(x,e,\tau)|(x,e)=({\bf 0},{\bf 0})\}$. 
\end{proof}
{\color{black}The major assumption in Theorem \ref{thmgeneral} is that a sum-type ISS-Lyapunov function exists for the continuous dynamics \eqref{impulstate1}. Under this assumption, the sampling period and the triggering function can be always found, such that $x,e$ will converge globally to a neighborhood of the origin whose size depends on the norm of $w$. }
\begin{remark}\label{remarkpara} 
The sampling period $h$ is related to the system dynamics through $\mu,\gamma$, \eqref{eqthm1h} and \eqref{eqthm1lam}. 
Whenever $\mu,\gamma,\alpha$ satisfying  \eqref{eqthmflow} are found, $\alpha_0,s,h,\lambda$ satisfying \eqref{eqthm1h}-\eqref{eqthm1slam} always exist.
	Specifically, since $\ln(1+s)\rightarrow 0$ as $s\rightarrow 0^+$, there always exist $\alpha_0,s,h$ satisfying \eqref{eqthm1h}. Because  $\mathcal{T}(\mu,\gamma)$ and $\tilde{\mathcal{T}}(\mu,\gamma,0)$ have the properties stated in Remark \ref{remarkT}, there always exists $\lambda$ satisfying \eqref{eqthm1lam}. If \eqref{eqthm1slam} does not hold with such $s$ and $\lambda$, then it is always possible to find a smaller $s$ such that \eqref{eqthm1slam} holds, while still guaranteeing that \eqref{eqthm1h} holds. Therefore, $\alpha_0,s,h,\lambda$ can always be found. On the other hand, the choices of  $s,h,\alpha_0$ will affect the triggering frequencies of the ETM by changing $\Gamma_x(x,e)$. Moreover, different values of $s,h,\alpha_0$ will also affect the estimate impact of the disturbance $w$ through the term $e^{\ln(1+s)}\frac{d}{\alpha-\alpha_0}\|w\|^2_{[t_0,t]}$. 
	
\end{remark}



\subsection{Output Feedback PETC Design}\label{subsec:NL}

Consider the configuration in Fig.\ref{figstate} (b) where the plant is  \eqref{NLsys1}, the output is \eqref{NLsys2}, the observer is \eqref{obsersampled} and the observer-based output feedback controller is \eqref{outputtriggerstate}. 
Define 
$\hat e(t)=x(t)-\hat x(t)$, $\xi(t)=
\begin{pmatrix}
x(t)\\
\hat e(t)
\end{pmatrix}$, and 
$\eta(t)=\begin{pmatrix}
y_e(t)\\x_e(t)\\
\end{pmatrix}$. Define $\tau$ as a clock variable, and ${\bf x}_o(t)=\begin{pmatrix}
\xi(t)\\\eta(t)\\\tau(t)
\end{pmatrix}$, ${\bf x}_o^+=
\begin{pmatrix}
\xi(h^+)\\\eta(h^+)\\\tau(h^+)
\end{pmatrix}$. 
Then the closed-loop system with ETMs in Fig.\ref{figstate} (b) 
is expressed as an impulsive model as follows:
\begin{align}
\dot{{\bf x}}_o\!&=\!F_o(\xi,\eta,w):=\begin{pmatrix}
\tilde f_o^1(\xi,\eta,w)\\
\tilde f_o^2(\xi,\eta,w)\\
1
\end{pmatrix},\;t\in(t_k,t_{k+1}],\label{impuloutput1}\\
{\bf x}_o^+\!&=\!G_o(\xi,\eta):=\begin{pmatrix}
\xi\\
g_o(\xi,\eta)\\
0
\end{pmatrix},\;\;\quad\quad t=t_k,\label{impuloutput2}
\end{align}
where 
\begin{align}
\tilde f_o^1(\xi,\eta,w)&=\begin{pmatrix}
f(x,k(\hat x_c),w)\\
f(x,k(\hat x_c),w)-\varphi(\hat x,k(\hat x_c),w)
\end{pmatrix},\nonumber\\
\tilde f_o^2(\xi,\eta,w)&=\begin{pmatrix}
\nabla g(x)\cdot f(x,k(\hat x_c),w)\\
-f(x,k(\hat x_c),w)+\varphi(\hat x,k(\hat x_c),w)
\end{pmatrix},\nonumber\\
g_o(\xi,\eta)&=\begin{pmatrix}
g_o^1(\xi,\eta)\\
g_o^2(\xi,\eta)
\end{pmatrix},\nonumber\\
g_o^1(\xi,\eta)&=
\begin{cases}
{\bf 0},\;\;\!\mbox{if}\;\Gamma_y(y,y_e)\geq 0,\\
y_e,\mbox{if}\; \Gamma_y(y,y_e)< 0,
\end{cases}\nonumber\\
g_o^2(\xi,\eta)&=\begin{cases}
{\bf 0},\;\;\!\mbox{if}\; \Gamma_u(\hat x,x_e)\geq 0,\\
x_e,\mbox{if}\; \Gamma_u(\hat x,x_e)< 0.
\end{cases}\nonumber
\end{align}


\begin{theorem}\label{thmgeneraloutput}
	Consider the configuration shown in Fig.\ref{figstate} (b) where the plant, output, observer and controller are given by \eqref{NLsys1}, \eqref{NLsys2}, \eqref{obsersampled} and \eqref{outputtriggerstate}, respectively.
	Suppose that there exist positive numbers $\mu_1,\mu_2,\gamma_1,\gamma_2$, $c_1,c_2,\alpha,d$, and differentiable, positive definite, radially unbounded functions $V_1(\xi):\R^{2n_x}\rightarrow \R_{\geq 0}$, $V_3(y):\R^{n_y}\rightarrow \R_{\geq 0}$ and $V_4(\hat x):\R^{n_x}\rightarrow \R_{\geq 0}$ such that $\forall {\bf x}_o\;a.e.$, $\forall w$, 
	\begin{align}
	&\nabla V({\bf x}_o)F_o(\xi,\eta,w)\leq -\alpha V({\bf x}_o)+d\|w\|^2,\label{eqthmoutflow}\\
	&c_1V_3(g(x))+c_2V_4(\hat x)\leq V_1(\xi),\label{ineq5}
	\end{align}
	where $V({\bf x}_o)=V_1(\xi)+V_2(\eta,\tau)$, $V_2(\eta,\tau)=c_1\phi_1y_e^\top y_e+c_2\phi_2x_e^\top x_e$, 
	and  $\phi_i(i=1,2)$ is the solution of ODE $\dot{\phi}_i=-2\mu_i\phi_i-\gamma_i(\phi_i^2+1)$.  
	Choose positive numbers $\alpha_0,s,h,\lambda_1,\lambda_2$ satisfying $\alpha_0<\alpha$, $\lambda_1<1$, $\lambda_2<1$, and 
	\begin{align}
	&\frac{\ln(1+s)}{\alpha_0}<h<\min\{\mathcal{T}(\mu_1,\gamma_1),\mathcal{T}(\mu_2,\gamma_2)\},\label{eqthm2c1}\\
	&h=\tilde{\mathcal{T}}(\mu_1,\gamma_1,\lambda_1),\;\;h=\tilde{\mathcal{T}}(\mu_2,\gamma_2,\lambda_2),\label{eqthm2c2}\\
	&(1+s)\lambda_1^2<1,\;\;(1+s)\lambda_2^2<1,\label{eqthm2c3}
	\end{align} 
	where $\tilde{\mathcal{T}}(\mu,\gamma,\lambda)$ and $\mathcal{T}(\mu,\gamma)$ are defined in  \eqref{eqtilT} and \eqref{MSAP}.
	Let the initial condition of $\phi_i$ be $\phi_i(0)=\lambda_i^{-1}$ for $i=1,2$.
	If the triggering functions are chosen as 
	\begin{align}
	\Gamma_y(y,y_e)&=(\lambda_1^{-1}-(1+s)\lambda_1)\|y_e\|^2-sV_3(y),\label{thm2triy}\\
	\Gamma_u(\hat x, x_e)&=(\lambda_2^{-1}-(1+s)\lambda_2)\|x_e\|^2-sV_4(\hat x),\label{thm2trix}
	\end{align}
	then the closed-loop system \eqref{impuloutput1}-\eqref{impuloutput2} is ISS w.r.t. the set $\{(x,\hat e,\tau)|(x,\hat e)=({\bf 0},{\bf 0})\}$.
\end{theorem}

\begin{proof} 
	By Lemma \ref{lem2}, $\phi_i(\tau)\in[\lambda_i,\lambda_i^{-1}]$ for any $\tau\in[0,h]$, and $\phi_i(h)=\lambda_i$, $i=1,2$.
	Because $V_1,V_2$ are both positive definite, the function $V$ is  positive definite  w.r.t. $\xi$ and $\eta$. (i.e., $V({\bf x}_o)\geq 0$ for any $\xi\in\R^{2n_x},\eta\in\R^{n_x+n_y}$, and $V({\bf x}_o)=0$ when $\xi=\eta={\bf 0}$, $V({\bf x}_o)\neq 0$ otherwise). Furthermore, $V({\bf x}_o)$ is differentiable and radially unbounded for any $\xi,\eta$.
	
	During the continuous dynamics when $t\in(t_k,t_{k+1}]$, the inequality \eqref{eqthmoutflow} holds. Hence, 
	\begin{align}
	&V({\bf x}_o(t))\geq \frac{d}{\alpha-\alpha_0}\|w(t)\|^2\nonumber\\
	\Rightarrow \;&\dot{V}({\bf x}_o(t))\leq -\alpha_0V({\bf x}_o(t)),\;\forall t\in(t_k,t_{k+1}] \;a.e.\label{B1}
	\end{align}
	where $\dot V({\bf x}_o)$ is the derivative of $V$ along \eqref{impuloutput1}.
	
	At the impulse time when $t=t_k$, there are four cases regarding satisfaction of the input and output triggering conditions. Note that $(1+s)\lambda_i^2<1$ implies $\lambda_i^{-1}-(1+s)\lambda_i>0$, for $i=1,2$. 
	(i) If $\Gamma_y(y,y_c)<0$ and $\Gamma_u(\hat x,x_e)<0$, the output and input triggering conditions are not met. Since $\Gamma_y(y,y_c)<0$, $\lambda_1^{-1}\|y_e\|^2< (1+s)\lambda_1\|y_e\|^2+sV_3(y)$; since $\Gamma_u(\hat x,x_e)<0$, $\lambda_2^{-1}\|x_e\|^2<(1+s)\lambda_2\|x_e\|^2+sV_4(\hat x)$. Therefore, $V({\bf x}_o^+)
	=V_1(\xi)+c_1\lambda_1^{-1}\|y_e\|^2+c_2\lambda_2^{-1}\|x_e\|^2
	<V_1(\xi)+c_1(1+s)\lambda_1\|y_e\|^2+c_1sV_3(y)
	+c_2(1+s)\lambda_2\|x_e\|^2+c_2sV_4(\hat x)
	\leq(1+s)V({\bf x}_o).$
	(ii) If $\Gamma_y(y,y_e)<0$ and $\Gamma_u(\hat x,x_e)\geq 0$, then $V({\bf x}_o^+)= V_1(\xi)+
	c_1\lambda_1^{-1}\|y_e\|^2
	< V_1(\xi)+c_1(1+s)\lambda_1\|y_e\|^2+c_1sV_3(y)
	\leq (1+s)V({\bf x}_o).$
	(iii) If $\Gamma_y(y,y_e)\geq 0$ and $\Gamma_u(\hat x,x_e)< 0$, then $V({\bf x}_o^+)= V_1(\xi)+
	c_2\lambda_2^{-1}\|x_e\|^2
	< V_1(\xi)+c_2(1+s)\lambda_2\|x_e\|^2+c_2sV_4(\hat x)
	\leq (1+s)V({\bf x}_o).$
	(iv) If $\Gamma_y(y,y_e)\geq 0$ and $\Gamma_u(\hat x,x_e)\geq 0$, then $V({\bf x}_o^+)= V_1(\xi)\leq V({\bf x}_o).$
	In summary, at the impulse time when $t=t_k$, 
	\begin{align}
	V({\bf x}_o^+)\leq (1+s)V({\bf x}_o)=e^{\ln(1+s)}V({\bf x}_o).\label{ineqthm2jump2}
	\end{align} 
	From \eqref{B1} and \eqref{ineqthm2jump2}, the same argument as in the proof of Theorem \ref{thmgeneral} can be used to show that 
	$$
	V({\bf x}_o(t))\leq\max\{e^{\frac{\ln(1+s)-\alpha_0h}{h}(t-t_0)} V({\bf x}_o(t_0)),
	e^{\ln(1+s)}\frac{d}{\alpha-\alpha_0}\|w\|^2_{[t_0,t]}\},\forall t\geq t_0.
	$$
	Since $e^{\frac{\ln(1+s)-\alpha_0h}{h}(t-t_0)}$ is a strictly decreasing function for $t\geq t_0$, and $V$ is positive definite and radially unbounded for any $\xi,\eta$,  by the standard argument  for ISS, it can be concluded that the closed-loop system  \eqref{impuloutput1}-\eqref{impuloutput2} is ISS w.r.t. the set $\{(\xi,\eta,\tau)|(\xi,\eta)=({\bf 0},{\bf 0})\}$, and therefore, it is ISS w.r.t. the set $\{(x,\hat e,\tau)|(x,\hat e)=({\bf 0},{\bf 0})\}$.  
\end{proof}

\begin{remark}
	Discussion similar to that in Remark \ref{remarkpara} also holds for the output feedback case. In particular, when parameters $\mu_1,\mu_2,\gamma_1,\gamma_2,c_1,c_2,\alpha,d$ are found, numbers $\alpha_0,s,h,\lambda_1,\lambda_2$ satisfying \eqref{eqthm2c1}-\eqref{eqthm2c3} always exist.	
\end{remark}


{\color{black}
\begin{remark}\label{remarkotheremu}
PETC design for nonlinear systems with exogenous disturbances was investigated using the emulation-based approach under a hybrid system framework in \cite{wang2016stabilization,wang2018periodic,wang2019periodic},  where the basic idea is to construct a hybrid Lyapunov function for the overall system by assuming that  $x$ and $e$ subsystems are both ISS and using small-gain techniques, which are known to be conservative in general. In contrast, Theorem \ref{thmgeneral} and \ref{thmgeneraloutput} above provide an impulsive system approach to solve PETC design of nonlinear systems, where the main assumption is that the continuous dynamics are ISS (i.e., \eqref{eqthmflow} in Theorem \ref{thmgeneral} and \eqref{eqthmoutflow}-\eqref{ineq5} in Theorem \ref{thmgeneraloutput}) and the key idea is to determine the valid interval of the sampling period (i.e., \eqref{eqthm1h}-\eqref{eqthm1slam} in Theorem \ref{thmgeneral} and \eqref{eqthm2c1}-\eqref{eqthm2c3} in Theorem \ref{thmgeneraloutput}) using  techniques from \cite{hespanha2008lyapunov,dashkovskiy2013input} and  \cite{carnevale2007lyapunov,nesic2009explicit}. Since the Lyapunov function of the overall system is chosen as that of the continuous dynamics, sufficient conditions of Theorem \ref{thmgeneral} and \ref{thmgeneraloutput} are imposed on the continuous dynamics directly. Although it is difficult to compare quantitatively the conservatism of the sufficient conditions  proposed above and those in \cite{wang2016stabilization,wang2018periodic,wang2019periodic}, Theorem \ref{thmgeneral} and  \ref{thmgeneraloutput} provide a novel and promising approach that is significantly different from existing results to tackle PETC design problems. 
\end{remark}
}

\section{PETC for Incrementally Quadratic Systems}\label{sec:dQC}

{\color{black}In this section, sufficient conditions in Theorem \ref{thmgeneral} and \ref{thmgeneraloutput} are expressed as LMI conditions, which can be solved by convex program solvers, for incrementally quadratic nonlinear systems. Therefore,  the sampling period and the triggering functions can be computed systematically  for a large class of nonlinear systems.}
Suppose that the plant in Fig.\ref{figstate} (a) and Fig.\ref{figstate} (b)  is an incrementally quadratic nonlinear system given as
\begin{align}\label{dyn1}
\begin{cases}
\dot x=Ax+Bu+Ep(q)+E_ww,\\
q=C_qx,
\end{cases}
\end{align}
where $x\in\R^{n_x}$ is the state, $u\in\R^{n_u}$ is the control input, $p:\R^{n_q}\rightarrow \R^{n_p}$ is a function representing the known nonlinearity, $w\in\R^{n_w}$ is the unknown external disturbance, and $A\in\R^{n_x\times n_x},B\in\R^{n_x\times n_u},C_q\in\R^{n_q\times n_x},E\in\R^{n_x\times n_p},E_w\in\R^{n_x\times n_w}$ are constant matrices with proper sizes.
The characterization of  $p$ is based on the incremental multiplier matrix defined below.
\begin{definition}\label{def:delQC}\cite{accikmecse2011observers}
	Given a function $p:\R^{n_q}\rightarrow \R^{n_p}$, a symmetric matrix $M\in\R^{(n_q+n_p)\times (n_q+n_p)}$ is called an \emph{incremental multiplier matrix} 
	for $p$ if it satisfies the following \emph{incremental quadratic constraint}: 
	\begin{equation}\label{eq:delQC}
	\begin{pmatrix}
	\delta q\\
	\delta p
	\end{pmatrix}^\top M 
	\begin{pmatrix}
	\delta q\\
	\delta p
	\end{pmatrix}\geq 0,\;\;\forall q_1, q_2\in\R^{n_q},
	\end{equation} 
	where $\delta q=q_2-q_1$, $\delta p=p(q_2)-p(q_1)$.	
\end{definition}
The incrementally quadratic nonlinear systems subsume globally Lipschitz nonlinear systems and many other common nonlinear systems \cite{accikmecse2011observers,xuobserverarxiv18}. Given a nonlinearity $p$, its incremental multiplier matrix that satisfies \eqref{eq:delQC} is not unique. 
Assume that $p({\bf 0}_{n_q})={\bf 0}_{n_p}$ in the following, which implies that $\begin{pmatrix}
q\\
p
\end{pmatrix}^\top M 
\begin{pmatrix}
q\\
p
\end{pmatrix}\geq 0,\;\forall q\in\R^q.$

\subsection{State Feedback PETC Design  For Incrementally Quadratic Nonlinear Systems}

Consider the configuration in Fig.\ref{figstate} (a) where the full-state information is available. Suppose that the plant is given as \eqref{dyn1}-\eqref{eq:delQC}
and the controller is $u=K_1x+K_2p(C_qx)$ 
where $K_1\in\R^{n_u\times n_x},K_2\in\R^{n_u\times n_p}$. 
In the following, matrices $K_1,K_2$ are assumed to be chosen such that the closed-loop system in Fig.\ref{figstate} (a) without ETM is ISS (e.g., by using the results of \cite{xuobserverarxiv18}). 
With ETMs in Fig.\ref{figstate} (a), the control input to the plant is given as 
\begin{align}
u(t)=K_1\tilde x_c(t)+K_2p(C_q\tilde x_c(t))\label{inputupdatestate}
\end{align}
where $\tilde x_c$ is defined in \eqref{stateTSM}. The closed-loop system in Fig.\ref{figstate} (a)
is expressed in the form of \eqref{impulstate1}-\eqref{impulstate2} with
$\tilde f_s(x,e,w)=(A+BK_1)x-BK_1e+(E+BK_2)p+BK_2\delta  \tilde p+E_ww$,
where $\delta \tilde p=p(q+\delta \tilde q)-p(q)$, $\delta \tilde q=-C_qe.$


\begin{theorem}\label{thmstate}
	Consider the configuration in Fig.\ref{figstate} (a) where the plant is \eqref{dyn1}-\eqref{eq:delQC} and the control input is  \eqref{inputupdatestate}. Given $\alpha>0$, suppose that there exist positive numbers $\mu,\gamma,d$, non-negative numbers $\sigma_1,\sigma_2$, matrix $P\in\R^{n_x\times n_x}$ where $P=P^\top\succ 0$, such that \eqref{LMIstate} holds where $\Psi,S_1,S_2$ are given as
	\begin{align}
	&\begin{cases}
	\Psi=P(A+BK_1)+(A+BK_1)^\top P+\alpha P,\\
	S_1=\begin{pmatrix}
	C_q, {\bf 0}_{n_q\times (2n_x+2n_p+n_w) }\\
	{\bf 0}_{n_p\times 2n_x}, I_{n_p},{\bf 0}_{n_p\times (n_x+n_p+n_w)}
	\end{pmatrix},\\
	S_2=\begin{pmatrix}
	{\bf 0}_{n_q\times n_x},-C_q, {\bf 0}_{n_q\times (n_x+2n_p+n_w) }\\
	{\bf 0}_{n_p\times (2n_x+n_p)}, I_{n_p},{\bf 0}_{n_p\times (n_x+n_w)}
	\end{pmatrix}.
	\end{cases}\label{eqthm1}
	\end{align}
	Choose positive numbers $\alpha_0,s,h,\lambda$ satisfying $\alpha_0<\alpha$, $\lambda<1$ and $\frac{log(1+s)}{\alpha_0}<h<\mathcal{T}(\mu,\gamma)$,
	$h=\tilde{\mathcal{T}}(\mu,\gamma,\lambda)$,
	$(1+s)\lambda^2<1$.
	If the triggering function is chosen as 
	$
	\Gamma_x(x,e)=(\lambda^{-1}-(1+s)\lambda)\|e\|^2-sx^\top Px,
	$
	then the closed-loop system in Fig.\ref{figstate} (a)  is ISS w.r.t. the set $\{(x,e,\tau)|(x,e)=({\bf 0},{\bf 0})\}$.
\end{theorem}
\begin{figure*}[!ht]
	\begin{align}
	&\begin{pmatrix}
	\Psi & -PBK_1 & P(E\!+\!BK_2) & PBK_2 & (A\!+\!BK_1)^\top &PE_w\\
	* & -\gamma I & {\bf 0} & {\bf 0} & -(BK_1)^\top\!+\!(\frac{\alpha}{2}\!-\!\mu)I & {\bf 0} \\
	* & * & {\bf 0} & {\bf 0} & (E\!+\!BK_2)^\top & {\bf 0} \\
	* & * & * & {\bf 0} & (BK_2)^\top & {\bf 0} \\
	* & * & * & * & -\gamma I & E_w \\
	* & * & * & * & * &-dI \\
	\end{pmatrix}\nonumber\\
	&\quad\quad\quad +\sigma_1 S_1^\top MS_1+\sigma_2 S_2^\top MS_2\preceq 0\label{LMIstate}
	\end{align}
\end{figure*}
Theorem \ref{thmstate} shows that $h$ and $\Gamma_x(x,e)$ can be constructed for the state feedback case by solving LMI \eqref{LMIstate}. The proof of Theorem \ref{thmstate} is similar to that of Theorem \ref{thmoutput} in the following, and is omitted due to space limitation.

\subsection{Observer-based Output Feedback PETC Design For Incrementally Quadratic Nonlinear Systems}

Consider the configuration in Fig.\ref{figstate} (b) where the measured output
information is available. The plant is  \eqref{dyn1}-\eqref{eq:delQC} and the output is $y=Cx$ where  $y\in\R^{n_y}$ and $C\in\R^{n_y\times n_x}$. 
Suppose that the observer is
\begin{align}\label{obserCT}
\begin{cases}
\dot{\hat{x}}=A\hat x\!+\!Bu\!+\!E p(\hat q\!+\!L_1(\hat y\!-\!y))\!+\!L_2(\hat y\!-\!y),\\
\hat y=C\hat x,\\
\hat q=C_q\hat x,
\end{cases}
\end{align}
with $L_1\in\R^{n_q\times n_y},L_2\in \R^{n_x\times n_y}$, and the controller is 
\begin{align}\label{inputCT}
u(t)=K_1\hat x(t)+K_2 p(C_q\hat x(t)).
\end{align}  
In the following, matrices $L_1,L_2,K_1,K_2$ are assumed to be chosen such that the closed-loop system in Fig.\ref{figstate} (b) without ETMs is ISS (e.g., by using the results of \cite{xuobserverarxiv18}). 
With ETMs in Fig.\ref{figstate} (b), the observer becomes
\begin{align}\label{trobser}
\begin{cases}
\dot{\hat{x}}\!=\!A\hat x\!+\!Bu\!+\!E p(\hat q\!+\!L_1(\hat y\!-\!y_c))\!+\!L_2(\hat y\!-\!y_c),\\
\hat y\!=\!C\hat x,\\
\hat q\!=\!C_q\hat x.
\end{cases}
\end{align}
The observer-based controller now becomes
\begin{align}\label{inputupdate1}
u(t)=K_1\hat x_c(t)+K_2 p(C_q\hat x_c(t)).
\end{align} 
Then the closed-loop system in Fig.\ref{figstate} (b) is expressed in the form of  \eqref{impuloutput1}-\eqref{impuloutput2} with 
\begin{align*}
\tilde f_o^1(\xi,\eta,w)&=A_1\xi+A_2\eta+H_1p+H_2\delta  \check p+H_3\delta\hat p+H_4w,\\
\tilde f_o^2(\xi,\eta,w)&=A_3\xi+A_4\eta+H_5p+H_6\delta  \check p+H_7\delta\hat p+H_8w,
\end{align*}
where $\delta \hat p=p(q+\delta \hat q)-p(q)$, $\delta \hat q=C_q(x_e-\hat e)$, $\delta \check p=p(q+\delta \check q)-p(q)$, $\delta \check q=-(C_q+L_1C)\hat e-L_1y_e$, and
\begin{align*}
&A_1\!=\!
\begin{pmatrix}
A\!+\!BK_1&-BK_1\\
{\bf 0}& A\!+\!L_2 C
\end{pmatrix}\!,A_4\!=\!
\begin{pmatrix}
{\bf 0}& -CBK_1\\
L_2& -BK_1
\end{pmatrix}\!,\\
&A_2\!=\!
\begin{pmatrix}
{\bf 0}& BK_1\\
L_2& {\bf 0}
\end{pmatrix}\!,A_3\!=\!
\begin{pmatrix}
-C(\!A\!+\!BK_1\!)&CBK_1\\
-(\!A\!+\!BK_1\!)& A\!+\!BK_1\!+\!L_2 C
\end{pmatrix}\!,\\ 
&H_1\!=\!
\begin{pmatrix}
E\!+\!BK_2\\
{\bf 0}
\end{pmatrix}\!,H_2\!=\!
\begin{pmatrix}
{\bf 0}\\
-E
\end{pmatrix}\!,H_3\!=\!
\begin{pmatrix}
BK_2\\
{\bf 0}
\end{pmatrix}\!,\\
&H_4\!=\!
\begin{pmatrix}
E_w\\
E_w
\end{pmatrix}\!,H_5\!=\!
\begin{pmatrix}
-C(\!E\!+\!BK_2\!)\\
-(\!E\!+\!BK_2\!)
\end{pmatrix}\!,H_6\!=\!
\begin{pmatrix}
{\bf 0}\\
-E
\end{pmatrix}\!,\\
&H_7\!=\!
\begin{pmatrix}
-CBK_2\\
-BK_2
\end{pmatrix}\!,H_8\!=\!
\begin{pmatrix}
-CE_w\\
{\bf 0}
\end{pmatrix}\!.
\end{align*}		

\begin{figure*}[!ht]
	\begin{align}\label{LMI1}
	&\begin{pmatrix}
	PA_1\!+\!A_1^\top P\!+\!\alpha P & PA_2 & PH_1 & PH_2 & PH_3 & A_3^\top& PH_4\\
	* & R_1& {\bf 0} & {\bf 0} & {\bf 0} & A_4^\top\!+\!R_3^\top\!+\!\frac{\alpha}{2}I & {\bf 0}\\
	* & * & {\bf 0} & {\bf 0} &{\bf 0} & H_5^\top & {\bf 0} \\
	* & * & * & {\bf 0} & {\bf 0} & H_6^\top & {\bf 0} \\
	* & * & * & * & {\bf 0} & H_7^\top & {\bf 0} \\
	* & * & * & * & * & R_2 & H_8 \\
	* & * & * & * & * & * & -dI
	\end{pmatrix}\nonumber\\
	&\quad \quad +\sigma_1 S_1^\top MS_1+\sigma_2 S_2^\top MS_2+\sigma_3 S_3^\top MS_3\preceq 0
	\end{align}
	\begin{align}
	&\begin{cases}
	R_1=\begin{pmatrix}
	-a_1I_{n_y} & {\bf 0} \\
	{\bf 0} & -a_2I_{n_x}
	\end{pmatrix},\;
	R_2=\begin{pmatrix}
	-b_1I_{n_y} & {\bf 0} \\
	{\bf 0} & -b_2I_{n_x}
	\end{pmatrix},\;
	R_3=\begin{pmatrix}
	-\mu_1I_{n_y} & {\bf 0} \\
	{\bf 0} & -\mu_2I_{n_x}
	\end{pmatrix},\\
	S_1=\begin{pmatrix}
	C_q,{\bf 0}_{n_q\times (3n_x+2n_y+3n_p+n_w)}\\
	{\bf 0}_{n_p\times (3n_x+n_y)},I_{n_p},{\bf 0}_{n_p\times (n_x+n_y+2n_p+n_w)}
	\end{pmatrix},\\
	S_2=\begin{pmatrix}
	{\bf 0}_{n_q\times n_x},-(C_q+L_1C),-L_1, {\bf 0}_{n_q\times (2n_x+n_y+3n_p+n_w) },\\
	{\bf 0}_{n_p\times (3n_x+n_y+n_p)}, I_{n_p},{\bf 0}_{n_p\times (n_x+n_y+n_p+n_w)}
	\end{pmatrix},\\
	S_3=\begin{pmatrix}
	{\bf 0}_{n_q\times n_x},-C_q, {\bf 0}_{n_q\times n_y},C_q,{\bf 0}_{n_q\times (n_x+n_y+3n_p+n_w) }\\
	{\bf 0}_{n_p\times (3n_x+n_y+2n_p)}, I_{n_p},{\bf 0}_{n_p\times (n_x+n_y+n_w)}
	\end{pmatrix}.
	\end{cases}\label{formulathm2}
	\end{align}
\end{figure*}

\begin{theorem}\label{thmoutput}
	Consider  the configuration in Fig.\ref{figstate} (b) where the plant is \eqref{dyn1}-\eqref{eq:delQC}, the output is $y=Cx$, the observer is \eqref{trobser}, and the control input is \eqref{inputupdate1}.  Given $\alpha>0$, suppose that there exist positive numbers $\mu_1,\mu_2,a_1,a_2,b_1,b_2,d,\sigma_1,\sigma_2,\sigma_3$, and matrix $P\in\R^{2n_x\times 2n_x}$, $P=P^\top\succ 0$, such that \eqref{LMI1} holds
	where $R_1,R_2,R_3,S_1,S_2,S_3$ are given in \eqref{formulathm2}. Suppose that there exist 
	matrices  $P_1\in\R^{n_y\times n_y},P_1=P_1^\top\succ 0$, $P_2\in\R^{n_x\times n_x},P_2=P_2^\top\succ 0$, such that 
	\begin{align}
	\begin{pmatrix}
	c_1C^\top P_1 C&{\bf 0}\\
	{\bf 0}&c_2P_2
	\end{pmatrix}
	\!\preceq\! \begin{pmatrix}
	I_{n_x}&I_{n_x}\\
	{\bf 0}&-I_{n_x}
	\end{pmatrix}\!P\!\begin{pmatrix}
	I_{n_x}&{\bf 0}\\
	I_{n_x}&-I_{n_x}
	\end{pmatrix}\label{LMI2}
	\end{align}
	where $c_1=\sqrt{a_1/b_1}$, $c_2=\sqrt{a_2/b_2}$. 
	Choose positive numbers $\alpha_0,s,h,\lambda_1,\lambda_2$ satisfying $\alpha_0<\alpha$, $\lambda_1<1$, $\lambda_2<1$, and $\frac{log(1+s)}{\alpha_0}<h<\min\{\mathcal{T}(\mu_1,\gamma_1),\mathcal{T}(\mu_2,\gamma_2)\}$,
	$h=\tilde{\mathcal{T}}(\mu_1,\gamma_1,\lambda_1)$,$h=\tilde{\mathcal{T}}(\mu_2,\gamma_2,\lambda_2)$,
	$(1+s)\lambda_1^2<1$,$(1+s)\lambda_2^2<1,$
	where  $\gamma_1=\sqrt{a_1b_1}$, $\gamma_2=\sqrt{a_2b_2}$. 
	If the triggering functions are chosen as $\Gamma_y(y,y_e)=(\lambda_1^{-1}-(1+s)\lambda_1)\|y_e\|^2-sy^\top P_1 y$,
	$\Gamma_u(\hat x,x_e)=(\lambda_2^{-1}-(1+s)\lambda_2)\|x_e\|^2-s\hat x^\top P_2 \hat x$,
	then the closed-loop system in Fig.\ref{figstate} (b) is ISS w.r.t. the set $\{(x,\hat e,\tau)|(x,\hat e)=({\bf 0},{\bf 0})\}$.
\end{theorem}
\begin{proof} Define $V({\bf x}_o)=V_1(\xi)+V_2(\eta,\tau)$ where $V_1(x)=\xi^\top P\xi$, $V_2(\eta,\tau)=c_1\phi_1y_e^\top y_e+c_2\phi_2x_e^\top x_e$, ${\bf x}_o$ is defined in Subsec. \ref{subsec:NL}, and $\phi_i$ is the solution of ODE $\dot{\phi}_i=-2\mu_i\phi_i-\gamma_i(\phi_i^2+1)$ with the initial condition $\phi_i(0)=\lambda_i^{-1}$, for $i=1,2$. Define $V_3(y)=y^\top P_1y$ and $V_4(\hat x)=\hat x^\top P_2\hat x$. It is easy to see that if \eqref{eqthmoutflow} and \eqref{ineq5}  hold during the flow (i.e., when $t\in(t_k,t_{k+1}]$), then all the conditions of Theorem \ref{thmgeneraloutput} hold with $\Gamma_u,\Gamma_y$ given in \eqref{thm2triy}-\eqref{thm2trix}, and the conclusion follows immediately. 
	Define 
	$
	\varrho=\begin{pmatrix}
	\varrho_y\\
	\varrho_x
	\end{pmatrix}:=\begin{pmatrix}
	c_1\phi_1y_e\\
	c_2\phi_2x_e
	\end{pmatrix}
	$ 
	and 
	$\zeta=(\xi^\top,\eta^\top,p^\top,\delta\check p^\top,\delta\hat p,\varrho^\top,w^\top)^\top.$
	Clearly, $\varrho=Q\eta$, which implies that $V_2(\eta,\tau)=\eta^\top \varrho$. 
	During the flow \eqref{impuloutput1}, $\langle \nabla V({\bf x}_o), F_o(\xi,\eta,w) \rangle=\frac{\partial V_1}{\partial \xi}\tilde f_o^1(\xi,\eta,w)+\frac{\partial V_2}{\partial \eta}\tilde f_o^2(\xi,\eta,w) +\eta^\top \frac{\partial Q}{\partial \tau}\eta=2\xi^\top P(A_1\xi+A_2\eta+H_1p+H_2\delta  \check p+H_3\delta\hat p+H_4w)
	+2\varrho^\top(A_3\xi+A_4\eta+H_5p+H_6\delta  \check p+H_7\delta\hat p+H_8w)
	+\eta^\top R_1\eta+\varrho^\top R_2\varrho+2\eta^\top R_3\varrho.$ 
	Noting that 
	$\begin{pmatrix}
	q\\
	p
	\end{pmatrix}=S_1\zeta, \;
	\begin{pmatrix}
	\delta \check q\\
	\delta \check p
	\end{pmatrix}=S_2\zeta,\;\begin{pmatrix}
	\delta \hat q\\
	\delta \hat p
	\end{pmatrix}=S_3\zeta
	$, 
	it hold that $\sigma_1\zeta^\top S_1^\top MS_1\zeta\geq 0$, $\sigma_2\zeta^\top S_2^\top MS_2\zeta\geq 0$, $\sigma_3\zeta^\top S_3^\top MS_3\zeta\geq 0$. Multiplying the left-hand side and the right-hand side of \eqref{LMI1} by $\zeta^\top$ and $\zeta$, respectively, it follows that $2\xi^\top P(A_1\xi+A_2\eta+H_1p+H_2\delta  \check p+H_3\delta\hat p+H_4w)+2\varrho^\top(A_3\xi+A_4\eta+H_5p+H_6\delta  \check p+H_7\delta\hat p+H_8w)+\eta^\top R_1\eta+\varrho^\top R_2\varrho+2\eta^\top R_3\varrho+\alpha\xi^\top P\xi+\alpha\eta^\top \varrho-d\|w\|^2+\sigma_1\zeta^\top S_1^\top MS_1\zeta+ \sigma_2\zeta^\top S_2^\top MS_2\zeta+\sigma_3\zeta^\top S_3^\top MS_3\zeta\leq 0$. 
	Therefore, it is easy to obtain that 
	$\langle \nabla V({\bf x}_o), F_o(\xi,\eta,w) \rangle\leq-\alpha(\xi^\top P\xi+\eta^\top \varrho)+d\|w\|^2=-\alpha V({\bf x}_o)+d\|w\|^2$. 
	Therefore, \eqref{eqthmoutflow} holds during the flow.
	Since $\xi=\begin{pmatrix}
	I_{n_x}&{\bf 0}\\
	I_{n_x}&-I_{n_x}
	\end{pmatrix}\begin{pmatrix}
	x\\\hat x
	\end{pmatrix}$, multiplying $\begin{pmatrix}
	x\\\hat x
	\end{pmatrix}^\top$ and its transpose to the left-hand side and the right-hand side of \eqref{LMI2}, respectively, it follows that $c_1x^\top C^\top P_1 Cx+c_2\hat x^\top P_2 \hat x\leq \xi^\top P\xi$, which is equivalent to $c_1y^\top P_1 y+c_2\hat x^\top P_2\hat x\leq \xi^\top P\xi$. Therefore, \eqref{LMI2} implies that \eqref{ineq5} holds with $V_3=y^\top P_1 y,V_4=\hat x^\top P_2\hat x$. This completes the proof.
\end{proof}
{\color{black}
\begin{remark}
In \cite{wang2019periodic,luc2017periodic}, LMI-based sufficient conditions were given for
Lipschitz systems, which is a subset of incrementally quadratic systems considered above \cite{accikmecse2011observers}.	
\end{remark}}

\subsection{Special Case: Linear Control Systems}
PETC design for continuous-time linear  systems was investigated in  \cite{heemels2013periodic}. By letting $E={\bf 0}$, dynamics of \eqref{dyn1} becomes a linear system $\dot x=Ax+Bu+E_ww$, for which results in preceding subsections can be applied directly. 

For the configuration in Fig.\ref{figstate} (a), suppose that the state feedback controller implemented with ETM is $u(t)=K\tilde x_c(t)$
where $K\in\R^{n_u\times n_x}$ and $\tilde x_c$ is defined in \eqref{stateTSM}. Then the conditions of Theorem \ref{thmstate} becomes finding positive numbers $\mu,\gamma,d$, and a matrix $P=P^\top\succ 0$ such that
\begin{align*}
&\begin{pmatrix}
\Psi & -PBK & (A+BK)^\top &PE_w\\
* & -\gamma I & -(BK)^\top+(\frac{\alpha}{2}-\mu)I & {\bf 0} \\
* & * &  -\gamma I & E_w \\
* & * &  * &-dI \\
\end{pmatrix}\preceq 0
\end{align*}
where $\Psi=P(A+BK)+(A+BK)^\top P+\alpha P$. 

For the configuration in Fig.\ref{figstate} (b), suppose that the output is $y=Cx$ with $C\in\R^{n_y\times n_x}$,  the observer is $\dot{\hat{x}}=A\hat x+Bu+L(C\hat x-y_c)$ 
where $L\in\R^{n_x\times n_y}$, 
and the controller is $u(t)=K\hat x_c(t)$ 
where $\hat x_c$ is defined in \eqref{eqxc}. 
Then the conditions in Theorem \eqref{thmoutput} becomes finding positive numbers $\mu_1,\mu_2,a_1,a_2,b_1,b_2,d$, non-negative numbers $\sigma_1,\sigma_2,\sigma_3$, and a matrix $P=P^\top\succ 0$ such that 
\begin{align*}
&\begin{pmatrix}
\Psi & PA_2  & A_3^\top& PH_4\\
* & R_1& A_4^\top\!+\!R_3^\top\!+\!\frac{\alpha}{2}I & {\bf 0}\\
* & * &  R_2 & H_8 \\
* & * &  * & -dI
\end{pmatrix}\preceq 0
\end{align*}
where $\Psi=PA_1\!+\!A_1^\top P\!+\!\alpha P$, $A_i(i=1,2,3,4)$ and $H_4,H_8$ are given in the preceding subsection, $R_i(i=1,2,3)$ are given in \eqref{formulathm2}.

\section{Simulation Examples}\label{sec:exam}

\begin{example}\label{ex1}
	Consider the following plant given in \cite{borgers2018periodic,nesic2009explicit}:
	\begin{align*}
	\dot{x}=x^2-x^3+u+0.1w
	\end{align*}
	where a state feedback controller is given as 
	$u(t)=-2x(t)$. 
	For the configuration shown in Fig.\ref{figstate} (a), the controller becomes $u(t)=-2\tilde x_c(t)$ as in \eqref{inputtriggerstate}. The closed-loop system can be expressed as an impulsive model  \eqref{impulstate1}-\eqref{impulstate2} with $\tilde f(x,e,w)=x^2-x^3-2x+2e+w$. By using the SOSTOOLS toolbox (see \cite{papachristodoulou2013sostools}), it can be verified that \eqref{eqthmflow} holds with $V_1(x)=1.0192x^2-0.1298x^3+0.4784x^4$,  $\mu=0.4941,\gamma=4.4302,\alpha=1.2,d=0.1$. Since $\mathcal{T}(\mu,\gamma)=0.3314$, pick $s=0.1,\alpha_0=1.1$, and $h=0.1$, such that \eqref{eqthm1h} holds. Then there exists $\lambda=0.6$ such that $h=\tilde{\mathcal{T}}(\mu,\gamma,\lambda)$, and one can verify that $(1+s)\lambda^2<1$. 
	By Theorem \ref{thmgeneral}, the triggering condition is chosen as $\Gamma_x(e,x)=1.0067e^2-0.1V_1(x).$
	The simulation results for two sets of initial states and disturbance bounds are shown in Fig \ref{figex1}, where trajectories of the state $x$ and the input $u$ are depicted. The red lines (resp. blue  lines) indicate the simulation with the initial state $x(0)=0.3$ (resp. $x(0)=-0.4$) where the disturbance $w$ satisfying $\|w\|_\infty\leq 0.8$ (resp. $\|w\|_\infty\leq 0.2$) is generated uniformly and randomly. In the top subfigure, it can be observed that the state $x$ is eventually bounded in the presence of disturbances, and a larger bound of $w$ results in a larger ultimate bound of $x$; in the bottom subfigure, the input $u$ is piecewise-constant and it changes its value at each $t_k$ such that $\Gamma_x(e(t_k),x(t_k))\geq 0$. 
	\begin{figure}[!hb]
		\begin{center}
			\includegraphics[width=0.57\linewidth]{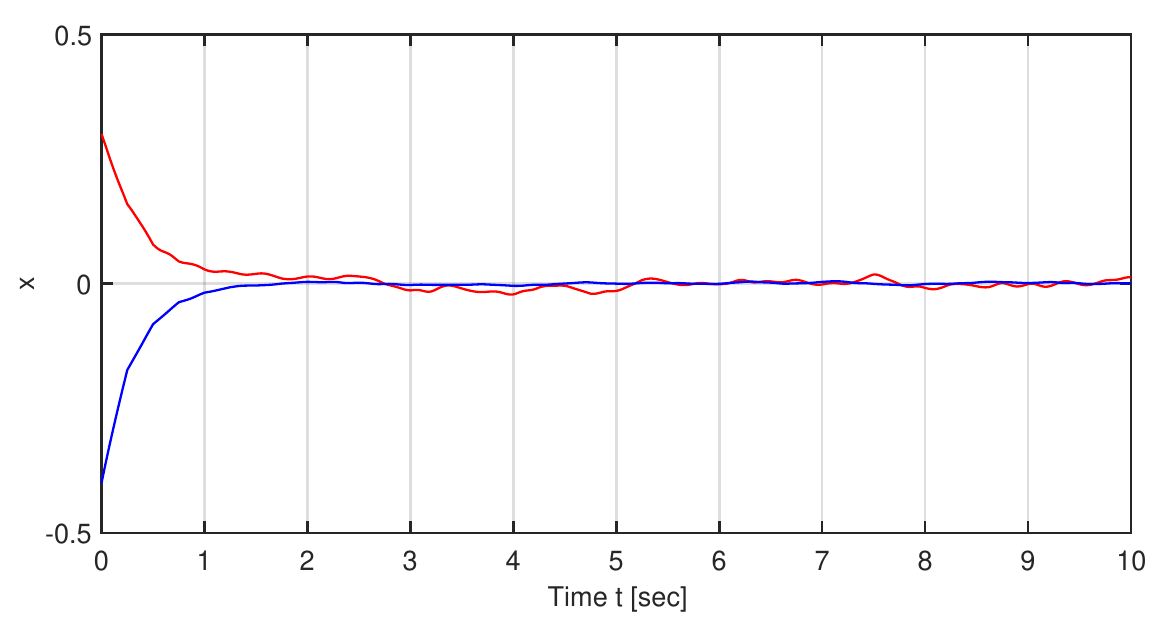}\vskip 1mm
			\includegraphics[width=0.57\linewidth]{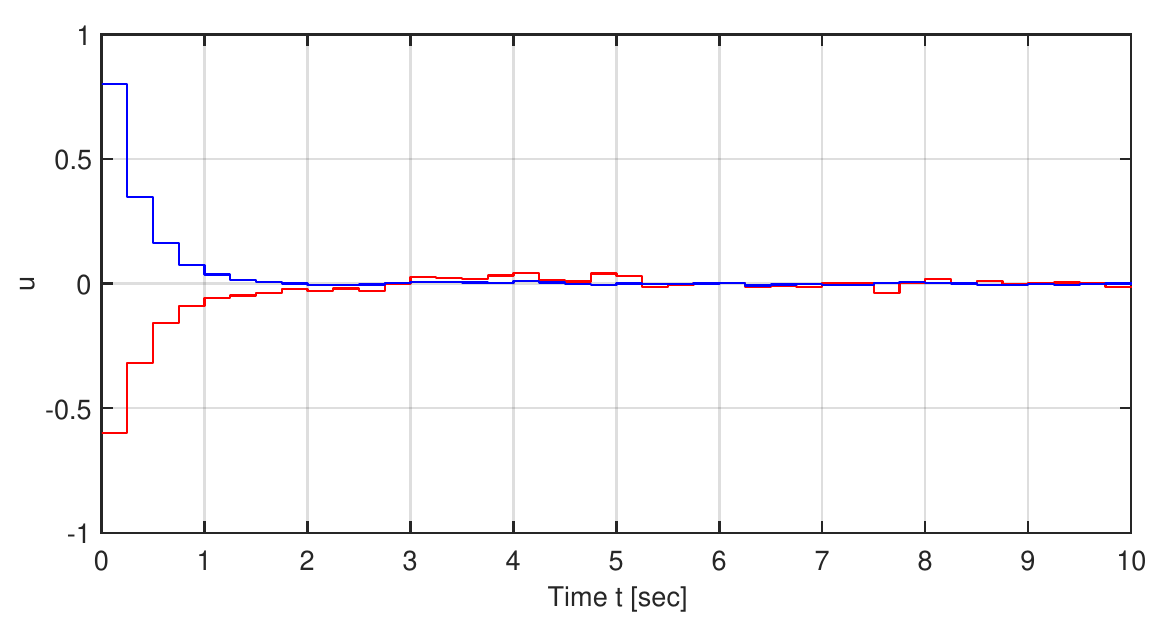}\vskip 1mm
			\includegraphics[width=0.56\linewidth]{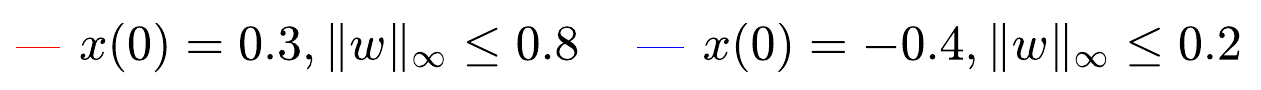}
			\caption{Trajectories of  $x,u$ in the simulation of Example \ref{ex1}.}\label{figex1}
		\end{center}
	\end{figure}

\end{example}


\begin{example}\label{ex2}
	Consider the following dynamical model of the  single-link robot arm given in \cite{abdelrahim2017robust}:
	\begin{equation*}
	\dot x_1=x_2,\;
	\dot x_2=-\sin(x_1)+u+w,\;
	y=x_1.
	\end{equation*}
	The system can be written in the form of \eqref{dyn1} with 
	$A\!=\!
	\begin{pmatrix}
	0&1\\
	0&0
	\end{pmatrix}$, $B\!=\!
	\begin{pmatrix}
	0\\
	1
	\end{pmatrix}$, $C\!=\!(1,0)$, $E\!=\!
	\begin{pmatrix}
	0\\
	-1
	\end{pmatrix}$, $E_w\!=\!
	\begin{pmatrix}
	0\\
	1
	\end{pmatrix}$, $q\!=\!x_1$, $C_q\!=\!(1,0)$, $p(q)\!=\!\sin(q)$.
	The nonlinearity $p$ is globally Lipschitz and satisfies the incremental quadratic constraint \eqref{eq:delQC} with $M=
	\begin{pmatrix}
	1&0\\
	0&-1
	\end{pmatrix}$. 
	%
	Consider the configuration in Fig.\ref{figstate} (b). 
	Assume that the continuous-time observer is \eqref{obserCT} and the control input is \eqref{inputCT}.  By the results of \cite{xuobserverarxiv18}, $K_1=(-7.3936,-3.9937)$,
	$K_2=1$, $L_1=-1$, $L_2=\begin{pmatrix}
	-5.1294\\-18.0352
	\end{pmatrix}$ can be chosen.  
	By letting $\alpha=1.1$, the LMI \eqref{LMI1} in Theorem \ref{thmoutput} is solved, which yields  the values of  $a_1,a_2,b_1,b_2,\mu_1,\mu_2,d,\gamma_1,\gamma_2,c_1,c_2$, from which  $\mathcal{T}(\mu_1,\gamma_1)=0.0751,\mathcal{T}(\mu_2,\gamma_2)=0.0639$. Then, solve the LMI \eqref{LMI2} to obtain the matrices $P_1=0.1462$ and $P_2=\begin{pmatrix}
	0.6307  &  0.1195 \\
	0.1195  &  0.1434
	\end{pmatrix}$.
	Choose $h=0.02,s=0.02,\lambda_1=0.627,\lambda_2=0.575,\alpha_0=1$. By Theorem \ref{thmoutput}, the triggering functions are chosen as $\Gamma_y(y,y_e)=0.9554\|y_e\|^2-0.02y^\top P_1y,
	\Gamma_u(\hat x,x_e)=1.1526\|x_e\|^2-0.02\hat x^\top P_2\hat x.$
	Choose the initial state as $x_1(0)=-0.2,x_2(0)=0.6,\hat x_1(0)=-0.3,\hat x_2(0)=0.7$, and let the disturbance be randomly generated and satisfies $\|w\|_\infty\leq 0.05$. The simulation results are shown in Fig. \ref{figex22}, where the trajectories of $x$, $\hat e$ and $u$ are plotted. It can be seen that $x_1,x_2,\hat e$  all eventually go to a neighborhood of the origin in the presence of disturbances.
		\begin{figure}[!ht]
		\begin{center}
			\includegraphics[width=0.6\linewidth]{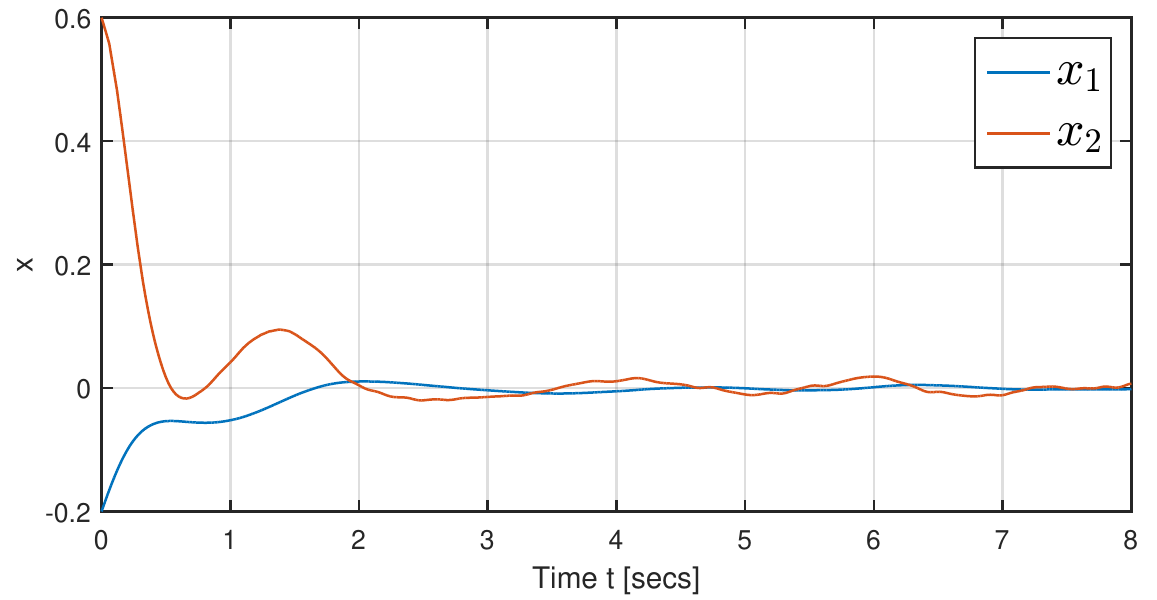}\vskip 1mm
			\includegraphics[width=0.6\linewidth]{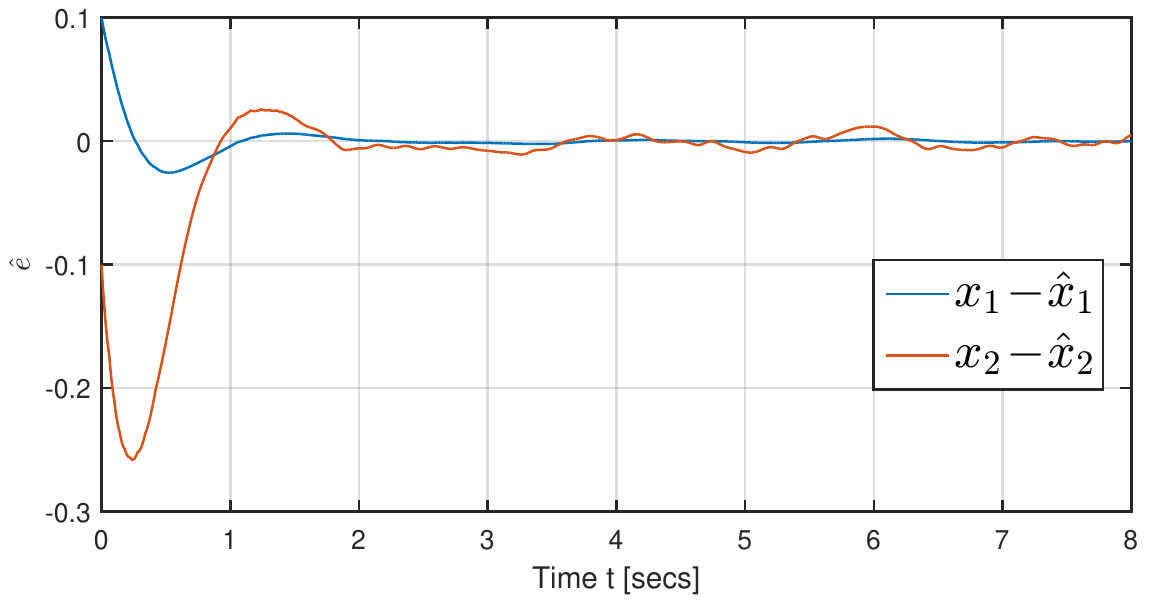}\vskip 1mm
			\includegraphics[width=0.6\linewidth]{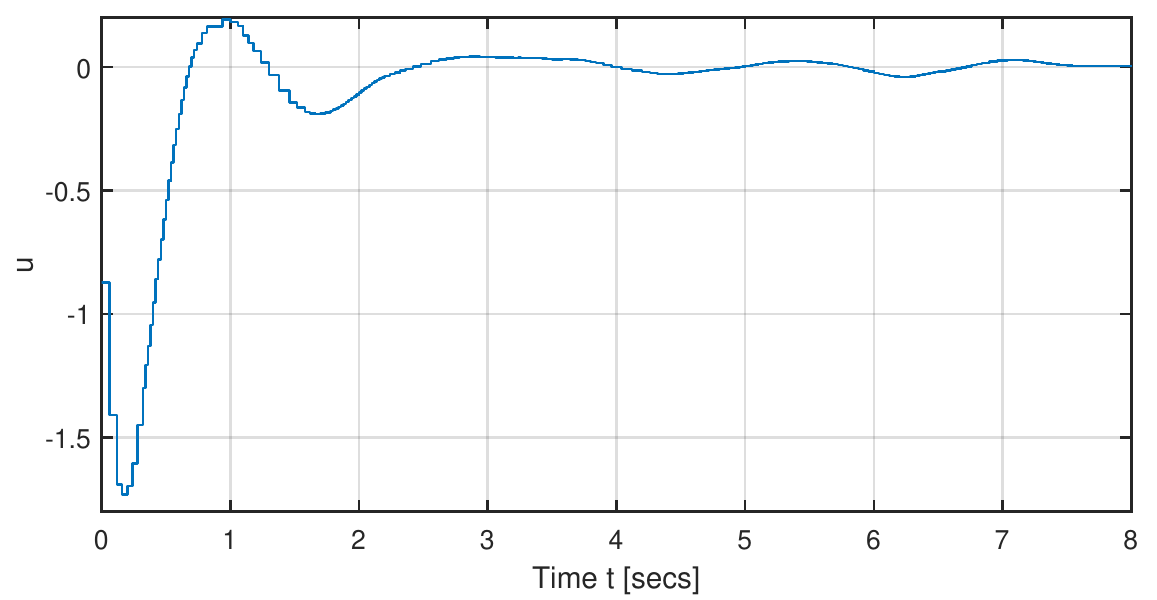}
			\caption{Trajectories of $x,\hat x,u$ in the simulation of Example \ref{ex2}. }\label{figex22}
		\end{center}
	\end{figure}

\end{example}

\section{Conclusion}\label{sec:conclusion}
This paper investigated periodic event-triggered control design for nonlinear systems subject to disturbances using the impulsive system approach. Sufficient conditions were proposed to ensure the resulting closed-loop system input-to-state stable using state feedback and observer-based output feedback controllers, respectively. LMI-based sufficient conditions for the PETC design of incrementally quadratic nonlinear systems were also proposed.  For all the cases considered, the sampling period and the triggering functions were given explicitly.


\bibliographystyle{IEEEtran}
\bibliography{./PETC}

\end{document}